\documentclass[11pt]{amsart}
\usepackage{amssymb}
\usepackage{amsfonts}
\usepackage{latexsym}
\usepackage{amscd}
\usepackage[mathscr]{euscript}

\usepackage[active]{srcltx}


\usepackage{enumerate}

\usepackage[utf8]{inputenc}
\usepackage[all,cmtip,2cell]{xy}

\title{Stable elements and Property (S)}
\usepackage{multicol}

\author{Joan Bosa}
\address{Dpt. de Matemàtiques, Fac. C, Universitat Autònoma de Barcelona, 08193 Bellaterra, Barcelona (Spain)}
\email{jbosa@mat.uab.cat}

\hfuzz=3pt \vfuzz=3pt

\addtolength{\textwidth}{2cm} \addtolength{\oddsidemargin}{-1cm}
\addtolength{\evensidemargin}{-1cm} \textheight=22.15truecm

\theoremstyle{plain}
\newtheorem{lemma}{Lemma}[section]
\newtheorem{theorem}[lemma]{Theorem}
\newtheorem{corollary}[lemma]{Corollary}
\newtheorem{proposition}[lemma]{Proposition}
\newtheorem{definition}[lemma]{Definition}
\newtheorem{question}[lemma]{Question}
\newtheorem*{proposition*}{Proposition}
\newtheorem*{theorem*}{Theorem}
\newtheorem*{definition*}{Definition}
\newtheorem*{claim*}{Claim}
\newtheorem*{notation*}{Notation}

\newtheorem{remark}[lemma]{Remark}

\newcommand{\Cu}{\mathrm{Cu}}

\newcommand{\ra}{\rangle}
\newcommand{\la}{\langle}

\usepackage{mathtools}

\thanks{The author was partially supported both by the DGI-MINECO and European Regional Development Fund through the grant MTM2017-83487-P and by the Generalitat de Catalunya through the grant 2017-SGR-1725. Also, this research was carried out thanks to the Beatriu de Pinós postdoctoral programme of the Government of Catalonia's Secretariat for Universities and Research of the Ministry of Economy and Knowledge.}
\date{\today}

\begin{document}
	\begin{abstract}
	We study the relation (and differences) between stability and Property (S) in the simple  and stably finite framework. This leads us to characterize stable elements in terms of its support, and study these concepts from different sides :  hereditary subalgebras, projections in the multiplier algebra and order properties in the Cuntz semigroup. We use these approaches to show both that cancellation at infinity on the Cuntz semigroup just holds when its Cuntz equivalence is given by isomorphism at the level of Hilbert right-modules, and  that different notions as Regularity, $\omega$-comparison, Corona Factorization Property, property R, etc.. are equivalent under mild assumptions.
	\end{abstract}
	\maketitle
\section*{introduction}	
Characterizing and understanding the class of separable simple C*-algebras is an important problem since the origins of operator algebra theory. Concerning structural questions, one of the most natural (and old) problems wonders how to determine stability for separable and simple C*-algebras. Recall that a separable C*-algebra $A$ is said to be stable if $A\cong A\otimes\mathcal K$, where $\mathcal K$ is the C*-algebra of compact operators on a separable, infinite dimensional Hilbert space. Furthermore, we say that a positive element $a\in A$ is stable if its associated hereditary subalgebra $\overline {aAa}$ is stable.

Related answers of the above fundamental question were given by Brown, Cuntz, Hjelmborg, R\o rdam, Winter and many others (\cite{Cu78,Bro88,HRW07, HR98, OPR2, Ro97}). Indeed, in \cite{HR98} the authors shown that an $AF$-algebra is stable if and only if it admits no bounded traces. Moreover, they wondered if a C*-algebra $A$ is stable if and only if  satisfies what is known as property (S) ($A$ admits no bounded 2-quasitrace and no quotient of $A$ is unital). R\o rdam answers negatively the last question in \cite{Ro97}, where he built the first example of a C*-algebra $B$ such that $M_2(B)$ is stable, but $B$ is not stable. Hence, stability is not an stable property.

 In the literature, there exist several properties that relate property (S) and stability. We say that a C*-algebra $A$ is regular (asymptotically regular)  if any full subalgebra $D$ of $A\otimes \mathcal K$ satisfying property (S) is itself stable (there exists $n\geq 1$ such that $M_n(D)$ is stable). The notion of regularity was coined by R\o rdam in \cite{RoSt} to study stable C*-algebras, and it is equivalent to both a pure algebraic condition known as $\omega$-comparison for the Cuntz semigroup of $A$ (see \cite{OPR2}), and a more analytical property concerning projections of $\mathcal M(A\otimes \mathcal K)$. Indeed, it is shown in Proposition \ref{prop:PropS} that a multiplier projection $P$ belongs to a non-regular ideal of $\mathcal M(A\otimes\mathcal K)$ if and only if the hereditary C*-subalgebra $P(A\otimes\mathcal K)P$ satisfies property (S).

Another important property for C*-algebras linked to stability is the Corona Factorization Property (CFP). This was introduced by Elliott-Kucerovsky in \cite{EK01} in order to study absorbing extensions, and it is one of the natural candidates to determine "nice" C*-algebras. The CFP was used by Zhang (and later by Ortega-Perera-R\o rdam in \cite{OPR2}) to get a dichotomy result (simple C*-algebras with real rank zero and CFP are either stably finite or purely infinite). There exist several equivalent definitions of CFP (see for instances \cite{BP18, NgCFP, OPR1, RobertRordam}). We take as the definition the following property, which is shown to be equivalent to the CFP in \cite[Theorem 4.2]{KN06} and \cite[Theorem 5.13]{OPR2}. A separable C*-algebra $A$ satisfies the CFP if given a full, hereditary subalgebra $D$ of $A\otimes \mathcal K$ such that $M_n(D)$ is stable for some $n\geq 1$, then $D$ itself is stable.

Focusing on simple, stably finite and separable C*-algebras, we study all the above mentioned properties looking at the  Cuntz semigroup invariant associated to C*-algebras. In this setting, the Cuntz semigroup has a unique properly infinite element, which we usually denote by $\infty$ (see \cite{BP18} for further details in this direction). The $\infty$ element, which coincides with the largest element of the Cuntz semigroup, has different properly infinite representatives in $(A\otimes\mathcal K)_+$ (a natural representative of $\infty$ is a strictly positive element of $A\otimes\mathcal K$); however, one wonders whether all of the other representatives of $\infty$ are also stable elements. A deep study of (properly) infinite elements was run by Kirchberg-R\o rdam, and they explained the main differences between these elements in \cite{KR00}. Building up from there, in Lemma \ref{lem:Stable} of the current paper, stable elements are characterized showing that this notion is equivalent to ask that any small portion at the beginning of its spectrum is properly infinite. Using this description, we show in Theorem \ref{Thm:stableCSE} that asking for all properly infinite elements to be stable is equivalent to an algebraic cancellation property of the Cuntz semigroup known as cancellation of small elements at infinite. In particular, Theorem \ref{Thm:stableCSE} shows that weak cancellation property for the Cuntz semigroup just holds when the Cuntz equivalence of positive elements is induced by isomorphism of the associated Hilbert right-modules. Furthermore, we use Lemma \ref{lem:Stable} and Theorem \ref{Thm:stableCSE} to study $(\omega,n)$-decomposable elements as defined in \cite{RobertRordam}. It is important to point out that the relation between these elements and stability was already stated in \cite[Proposition 9.7]{RobertRordam}; however, the proof of the implication {\rm (i)}$\Rightarrow${\rm (ii)} shown in \cite{RobertRordam} is not correct. We are able to clarify this relationship in Lemma \ref{Lem:xdecomposable}.

As a result of a more analytical approach, it was shown by Brown in \cite{Bro88} that an element $a$ in $A\otimes\mathcal K$ is stable if and only if the multiplier projection associated to its hereditary subalgebra is Murray-von Neumann equivalent to the unit of $\mathcal M(A\otimes\mathcal K)$. Combining this result with our study of property (S) displayed in Proposition \ref{prop:PropS}, we show in Theorem \ref{Thm:RAssym} that asymptotic regularity is equivalent to property R (if a projection is contained in a proper ideal of $\mathcal M(A\otimes\mathcal K)$, it is also contained in a regular ideal). As application of  Theorem \ref{Thm:RAssym},  we show in Corollary \ref{Cor:Dich} that both property $R$ and asymptotic regularity imply dichotomy between stably finite and purely infinite C*-algebras in the simple setting.

We briefly outline the contents of this paper. In the first section we recall notation and some background needed to understand the sequel. In order to ease reading this section, it is divided in two parts: we first provide the necessary knowledge about $\Cu$-semigroups, and we subsequently recall the comparison properties such as $\omega$-comparison and CFP used in the current paper. Section \ref{Sec:Stable} is dedicated to stable elements. In particular, we state the characterization of stable elements explained along the introduction and some of its implications. We finish this section studying the notion of $(\omega,n)$-decomposable elements previously introduced in \cite{RobertRordam}. In section \ref{sec:asymomega} we develop all the machinery needed to get Theorem \ref{Thm:RAssym}. This part was initiated some time ago as a collaboration with M. Christensen, and some results in it already appeared in his PhD-thesis \cite{ChTh}; though, they have never been published in an article before. 

\section{Notation and Preliminaries}\label{Sec:Preliminaries}

\subsection{Cuntz semigroup}
In this part we recall the main facts about the Cuntz semigroup that will be used along the sequel. This semigroup has been deeply studied in the last years due to its relation to the classification programme of separable simple nuclear C*-algebras. We encourage the reader to look at the overview article \cite{APT} for further details on this semigroup. Let's start by its definition.

\begin{definition}\label{defCu}
	Let $A$ be an stable C$^*$-algebra, i.e. $A\cong A\otimes\mathcal K$ (where $\mathcal K$ denotes the C*-algebra of compact operators in an infinite dimensional Hilbert space), and let $a$, $b\in A_+$. We say that $a$ is \emph{Cuntz subequivalent} to $b$, in symbols $a\precsim b$, provided there is a sequence $(x_n)$ in $A$ such that $x_nbx_n^*$ converges to $a$ in norm, i.e. $\|a-x_nbx_n^*\|\to 0$. We say that $a$ and $b$ are \emph{Cuntz equivalent} if $a\precsim b$ and $b\precsim a$, and in this case we write $a\sim b$. Considering this equivalence relation in $A\otimes \mathcal K$, one obtains the abelian semigroup $\Cu(A):=(A\otimes \mathcal K)_+/\!\!\sim$. We denote the equivalence classes by $\la a\ra$. 
	
	This is a partially ordered abelian semigroup where the operation and order are given by $$\la a\ra+\la b\ra=\la \left(\begin{matrix} a & 0 \\ 0 & b \end{matrix}\right)\ra=\la a\oplus b\ra,\,\,\,\,\,\,\la a\ra\leq \la b\ra\text{ if }a\precsim b.$$
	In particular, the semigroup $\Cu(A)$ is referred to as the \emph{Cuntz semigroup}.
\end{definition}

As explained in \cite{BTZ18, CEI08, ORT}, among others, the Cuntz semigroup of a separable C*-algebra $A$ can be described via different frameworks, i.e. positive elements, hereditary subalgebras, open projections in $A^{**}$ and projections in the multiplier algebra $\mathcal M(A\otimes \mathcal K)$. We will use all these settings along the sequel, so let us shortly recall the main facts and notation.

Any positive element in a C*-algebra $A$ naturally defines: an hereditary subalgebra $A_a:=\overline{aAa}$, an open projection $p_a:={\rm (strong) }\,\lim a^{1/n}$ in $A^{**}$, and a Hilbert $A$-module $E_a:=\overline{aA}$ . One can define the Cuntz semigroup of a C*-algebra $A$ looking at each of these different pictures (see \cite{ORT} for further details). In the Hilbert $A$-modules picture, for two Hilbert $A$-modules $E,F$, we write $E\Subset F$ if there exists $x\in\mathcal K(F)$, the compact operators of $\mathcal L(F)$, such that $xe=e$ for all $e\in E$. We use it to define the Cuntz subequivalence between $E$ and $F$, written as $E\precsim_\Cu F$, as if for every Hilbert $A$-submodule $E'\Subset E$ there exists $F'\Subset F$ with $E'\cong F'$ (isometric isomorphism). It is important to notice that, given positive element $a,b\in A_+$, it follows that $E_a\cong E_b$ if and only $A_a\cong A_b$. However, the equivalence relation induced by isomorphism on Hilbert $A$-modules is stronger than the Cuntz equivalence just defined (see \cite{BC09} for a concrete counterexample). It is known that under the extra assumption of stable rank one, the  Cuntz relation is equivalent to the equivalence relation induced by isomorphism on Hilbert $A$-modules.

Moving to the algebraic framework, the Cuntz semigroup of a C*-algebra always belongs to an algebraic category of ordered complete semigroups called $\Cu$. The objects of this category are called $\Cu$-semigroups, and we usually denote them by $S$ and $T$ (see \cite{APT14} for further details). 

Fixing $S$ a $\Cu$-semigroup, let us remind some of its main properties. Every increasing sequence has a supremum  in $S$, and $S$ has an auxiliary relation usually denoted by $\ll$, and called way-below. In particular, one writes \textbf{$x\ll y$} if, whenever $\{x_{n}\}$ is an increasing sequence  satisfying
$y\leq \sup\,x_{n}$, then $x\leq x_{n}$ for some $n$. An element $x$ in $S$ is called \textbf{compact}, if $x\ll x$. We write \textbf{$y<_{s}x$} if there exists $k\in \mathbb{N}$ such that $(k+1)y\leq kx$, and $y\propto x$ if there exists $n\in\mathbb N$ such that $y\leq nx$. 
An element $x$ in $S$ is called \textbf{full} if for any $y',y\in S$ with $y'\ll y$, one has $y'\propto x$, denoted by $y\,\bar{\propto}\, x$.
A sequence $\{x_{n}\}$ in $S$ is said to be \textbf{full} if it is increasing and for any $y',y\in S$ with $y'\ll y$, one has $y'\propto x_{n}$ 
for some (hence all sufficiently large) n. Notice that if $x\in S$ is an order unit, it is also a full element, but the reverse is not true.

Using these notions, we say that $S$ is \textbf{simple} if $x\,\bar{\propto}\, y$ for all nonzero $x,y\in S$. In other words, $S$ is simple if every nonzero element is full.

Focusing on the behaviour of elements in $S$, let us recall that an element $a\in S$ is {\bf finite} if for every element $b\in S$ such that $a+b\leq a$, one has $b=0$. An element is \textbf{infinite} if it is not finite. An infinite element $a\in S$ is {\bf properly infinite} if $2a\leq a$. We say that $S$ is {\bf stably finite} if an element $a\in S$ is finite whenever there exists $\tilde{a}\in S$ with $a\ll \tilde{a}$. In particular, if $S$ contains a largest element (always happens when it is countably based or simple), usually denoted by $\infty$, then the condition of being stably finite is equivalent to $a\ll \infty$ (\cite[Paragraph 5.2.2.]{APT14}). Furthermore, if $S$ is simple, then $S$ is {\bf purely infinite} if $S=\{0,\infty\}$, i.e. $S$ only contains the zero and largest elements.

As shown in \cite[Proposition 5.2.10]{APT14}, the $\infty$ element in a simple $\Cu$-semigroup $S$ is not compact if and only if $S$ is stably finite. We will denote the elements way-below $\infty$ by
$$S_{\ll\infty}:=\{s\in S\mid s\ll\infty\}.$$
In fact, to ease notation, we will often denote it without $\infty$, i.e. $S_\ll$.

We finish this part about Cu-semigroups describing their functionals. By a functional on a $\Cu$-semigroup $S$ we mean a map $\lambda:S\to [0,\infty]$ that preserves addition, order, the zero element, and suprema of increasing sequences. Note that if $S$ is a simple $\Cu$-semigroup, then every functional is faithful (i.e. for nonzero $\lambda\in F(S),\, \lambda(x)\neq 0$ if $x\neq 0$). We denote the set of functionals on $S$ by $F(S)$. The differences and relations between functionals and states that one can associate to a $\Cu$-semigroup are deeply studied in \cite{BP18}. We also encourage the reader to look at \cite{Robert13} to know more about functionals in this framework. 

\subsection{Comparison properties}
Let us now recall the comparison conditions that form the heart of this article. The majority of the results stated here were proved in \cite{BP18}, and we encourage the reader to check it for further details.

We start this part studying $\omega$-comparison property for $\Cu$-semigroups. This property has been studied in several articles such as \cite{ BP18,NgCFP, OPR1, OPR2}, and it is implied for the majority of known regularity conditions of separable simple stably finite C*-algebras. Indeed, Ng coined this property as regularity, and it was in \cite[Corollary 4.2.5]{BRTTW} where the authors shown that regularity is equivalent to an algebraic condition on $\Cu(A)$ called $\omega$-comparison. The equivalence described in Definition \ref{defAlgw} is shown in \cite{BP18}.

\begin{definition}
	A C*-algebra $A$ is said to be regular if every full hereditary subalgebra of $A$, with no nonzero unital quotients and no nonzero bounded 2-quasitraces, is stable.
\end{definition}

\begin{definition}\label{defAlgw}
Let $S$ be a simple $\Cu$-semigroup. Then, we say $S$ satisfies $\omega$-comparison if any of the following equivalent conditions (and then all)  holds:
\begin{enumerate}
	\item Whenever $(y_n)$ is a sequence of nonzero elements in $S_{\ll\infty}$ such that $y_n<_sy_{n+1}$ for all $n$, then $\sum^\infty_{n=1}y_n=\infty$ (in $S$).
	\item Whenever $(y_n)$ is a sequence of nonzero elements in $S_{\ll\infty}$ such that $\lambda(\sum^\infty_{n=1}y_n)=\infty$ for all functionals $\lambda\in F(S)$, then $\sum^\infty_{n=1}y_n=\infty$.
\end{enumerate}
\end{definition}

We continue this part with the Corona Factorization Property (CFP for short). As stated in the introduction this was introduced by Elliott-Kucerovsky in  order to study when extensions of C*-algebras are absorbing (\cite{EK01}). This property has many equivalent definitions depending on the setting one works (see \cite{NgCFP} for further study). We recall below the characterization of this property found in \cite[Theorem 4.2]{NgCFP}. This determines structural results about stable C*-algebras.

\begin{definition}{\rm (\cite[Theorem 4.2]{NgCFP})}\label{def:AnCFP}
Let $A$ be a separable stable C*-algebra. It satisfies the Corona Factorization Property if given a full, hereditary subalgebra  $D$ of $A$ such that $M_n(D)$ is stable for some integer $n\geq 1$, then $D$ itself is stable.	
\end{definition}

Moving to the simple Cu-semigroup setting, this property was characterized in \cite{OPR1} as a certain comparison property associated to the Cuntz semigroup. Indeed, a $\sigma$-unital C*-algebra $A$ has the corona factorization property if and only if its Cuntz semigroup $\Cu(A)$ has the CFP as defined below (\cite{OPR2}). Let's us recall a couple of equivalent definitions of CFP in this setting.

\begin{definition}{\rm(\cite{BP18, RobertRordam})}\label{PropCFP}
	Let $S$ be a simple $\Cu$-semigroup. Then it satisfies CFP if any of the following equivalent properties  (and then all)  hold: 
	\begin{itemize}
		\item   Given any full sequence $(x_{n})_n$ in $S$, any sequence $(y_{n})_n$ in $S$, an element $x'$ in $S$ such that $x'\ll x_{1}$, and a positive integer $m$ satisfying $x_{n}\leq m y_{n}$ for all $n$, then there exists a positive integer $k$ such that $x'\leq y_{1}+\ldots+y_{k}$.
	 	\item Given a sequence $(y_n)_n$ in $S_{\ll\infty}$ such that $m\cdot \sum_{n=k}^\infty y_n=\infty$ for some $m$ and all $k\in\mathbb{N}$, then $\sum_{n=1}^\infty y_n=\infty$.
	\end{itemize}
\end{definition}

The last condition we want to recall is property {\rm (QQ)}. This was originally introduced in \cite{OPR2}, and it is deeply related to both conditions just defined (see \cite{BP18} for further details). As stated before, the behaviour of the largest element in a simple $\Cu$-semigroup is complicated to characterize, and the following property just concerns about this element.

\begin{definition}{\rm(\cite{OPR2})}
	A $\Cu$-semigroup $S$ satisfies the property {\rm (QQ)} if every element in $S$, for which a multiple is properly infinite, is itself properly infinite.  
\end{definition}

For simple $\Cu$-semigroups there is a unique properly infinite element, usually denoted by $\infty$. Hence, the above means that if $mx=\infty$ for some $m\in\mathbb N$, then $x=\infty$.

\section{stable elements}\label{Sec:Stable}
The notion of stable element can be found in \cite{KR00}, where the authors study finite, infinite and properly infinite positive elements in a C*-algebra. They show in \cite[Proposition 3.7]{KR00} that any stable element is properly infinite, but few is known about the converse. 
 We start recalling the definition of stable elements in $A$. 
\begin{definition}
	A positive element $a$ in a C*-algebra $A$ is called \emph{stable} if $\overline{aAa}$ is a stable C*-algebra.
\end{definition}
Along this section we deeply study the above notion, and we relate it to the next:
\begin{definition}
	Let $A$ be a separable simple C*-algebra. We say that $A$ is $\Cu$-stable whether for any representative $a\in (A\otimes \mathcal K)_+$ of $\infty\in \Cu(A)$, i.e. $\la a\ra=\infty$ in $\Cu(A)$, one has that $\overline{a(A\otimes \mathcal K)a}$ is a stable C*-algebra.
\end{definition}

Note that the above definition mainly concerns about the C*-algebra rather its Cuntz semigroup. In Theorem \ref{Thm:stableCSE} we show that this property can be characterized as a cancellation property for the Cuntz semigroup. Furthermore, $A$ being $\Cu$-stable implies that $A$ is stably finite (Theorem \ref{Thm:stableCSE}). Hence, we will often assume this fact whenever $\Cu$-stability is asked.
It is worth to point out that, equivalently, the above definition asks that any properly infinite element is stable. 

In order to study stable elements, we will use the following two continuous  $f_\delta,g_\delta:\mathbb R_+\to \mathbb R_+$ functions. We thank J. Gabe for suggesting this direction.

\vspace{-0.5cm}
\begin{multicols}{2}
	$$f_\delta(x) = \left\{
	\begin{array}{ll}
	t, & \text{ for }t\in[0,\delta/2] \\
	\delta-t, & \text{ for }t\in[\delta/2,\delta] \\
	0 & \text{ for }t\geq \delta
	\end{array}
	\right.$$
	
	$$\hspace{-0.5cm} g_\delta(x) = \left\{
	\begin{array}{ll}
	0, & \text{ for }t\in[0,\delta/2] \\
	1-\sqrt{\delta t^{-1}-1}, & \text{ for }t\in[\delta/2,\delta] \\
	1 & \text{ for }t\geq \delta
	\end{array}
	\right.$$
	\end{multicols}

Note that $f_\delta(t)=(1-g_\delta(t))t(1-g_\delta(t))$ and that $f_\delta(t)\perp g_{2\delta}(t)$ for all $\delta>0$. Moreover, $\{g_\delta(a)\}_{\delta>0}$ defines an approximate unit on $A_a=\overline{aAa}$, for any positive element $a\in A_+$.

\begin{lemma}\label{lem:Stable}
	Let $a\in A_+$ be a positive element. The following are equivalent:
	\begin{enumerate}[\rm (i)]
		\item $a$ is stable
		\item $f_\delta(a)$ is properly infinite and full in $\overline{aAa}$ for every $\delta>0$
		\item $f_\delta(a)$ Cuntz dominates every element in $\overline{aAa}$ for every $\delta>0$.
	\end{enumerate}
\end{lemma}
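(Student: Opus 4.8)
The plan is to put $B:=\overline{aAa}$ and to exploit that $a$ is a strictly positive, hence full, element of $B$, that $\{g_\delta(a)\}_{\delta>0}$ is an approximate unit of $B$, and that every $x\in B_+$ satisfies $x\precsim a$. I will use repeatedly two elementary facts about the continuous functional calculus of $a$: for $h_1,h_2\in C_0(\sigma(a))_+$ one has $h_1(a)\precsim h_2(a)$ whenever the open support of $h_1$ lies in that of $h_2$, and $h_1(a)h_2(a)=0$ (so that the hereditary subalgebras $\overline{h_1(a)Bh_1(a)}$ and $\overline{h_2(a)Bh_2(a)}$ annihilate one another) whenever the supports are disjoint. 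In particular $f_{\delta'}(a)\precsim f_\delta(a)$ for $\delta'\le\delta$, and $f_{\eta/4}(a)\perp g_\eta(a)$ refines the orthogonality $f_\delta(a)\perp g_{2\delta}(a)$ already recorded.

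I would first prove the implication {\rm (ii)}$\Rightarrow${\rm (iii)}, which lives entirely in $\Cu(B)$. Fix $x\in B_+$ and $\delta>0$. Fullness of $f_\delta(a)$ gives, for each $x'\ll x$, an integer $n$ with $x'\le n\cdot f_\delta(a)$; proper infiniteness gives $n\cdot f_\delta(a)\precsim f_\delta(a)$ by induction, so $x'\precsim f_\delta(a)$. Taking the supremum over $x'\ll x$ yields $x\precsim f_\delta(a)$, as desired.

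Next I would prove {\rm (iii)}$\Rightarrow${\rm (i)} by verifying the Hjelmborg--R\o rdam stability criterion \cite{HR98} for $B$. Since $\{g_\eta(a)\}$ is an approximate unit, the elements lying in some $\overline{g_\eta(a)Bg_\eta(a)}$ are dense in $B$, and it suffices to check the criterion on such $x$. Given such an $x$ and $\e>0$, the cut-down $(x-\e)_+$ is supported in the upper region and is therefore orthogonal to $\overline{f_{\eta/4}(a)Bf_{\eta/4}(a)}$. By {\rm (iii)}, $(x-\e)_+\precsim f_{\eta/4}(a)$, so I may choose $b$ inside $\overline{f_{\eta/4}(a)Bf_{\eta/4}(a)}$ with $b\sim(x-\e)_+$ (after a further $\e$-cut-down). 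Then $b\perp(x-\e)_+$, $(x-\e)_+\precsim b$ and $b\precsim x$; this is precisely the orthogonal dominating copy required by \cite{HR98}, and hence $B$ is stable.

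The main work, and the expected obstacle, is {\rm (i)}$\Rightarrow${\rm (ii)}. Fullness of $f_\delta(a)$ I would extract from stability by realizing $B\cong B\otimes\cK$ through a sequence of isometries $v_n\in\mathcal{M}(B)$ with orthogonal ranges and $\sum_n v_nv_n^*=1$ strictly: the conjugates $v_nf_\delta(a)v_n^*$ lie in the ideal generated by $f_\delta(a)$ and, summing to a strictly positive element of $B$, force that ideal to be all of $B$. Proper infiniteness is the delicate point, because of the apparent circularity between $2\cdot f_\delta(a)\precsim f_\delta(a)$ and the equivalence $f_\delta(a)\sim a$. I would break it using \cite{HR98} applied to the test element $y=f_\delta(a)$ itself: it produces, for each $\e>0$, an element $b$ in the hereditary subalgebra $\overline{f_\delta(a)Bf_\delta(a)}$ with $b\perp(f_\delta(a)-\e)_+$ and $(f_\delta(a)-\e)_+\precsim b$. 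Since both $b$ and $(f_\delta(a)-\e)_+$ then lie in $\overline{f_\delta(a)Bf_\delta(a)}$ and are orthogonal, their sum is again in that hereditary subalgebra and hence Cuntz dominated by its strictly positive element $f_\delta(a)$; thus $2(f_\delta(a)-\e)_+\precsim(f_\delta(a)-\e)_+\oplus b\sim(f_\delta(a)-\e)_++b\precsim f_\delta(a)$, and letting $\e\to0$ gives $2\cdot f_\delta(a)\precsim f_\delta(a)$ (that stable elements are properly infinite is of course consistent with \cite[Proposition 3.7]{KR00}). The care needed here --- ensuring the dominating copy $b$ can be taken inside the beginning region $\overline{f_\delta(a)Bf_\delta(a)}$ rather than merely Cuntz below $f_\delta(a)$, and controlling the $\e$-cut-downs in the passage to the supremum --- is exactly where the precise form of the stability criterion and the geometry of the functions $f_\delta,g_\delta$ do the work.
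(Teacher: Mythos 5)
Your implications {\rm (ii)}$\Rightarrow${\rm (iii)} and {\rm (iii)}$\Rightarrow${\rm (i)} are correct and essentially coincide with the paper's: the first simply reproves \cite[Proposition 3.5]{KR00}, which the paper cites, and the second is the same argument the paper gives (cut down by $g_\eta(a)$, dominate the cut-down by an $f_{\delta}(a)$ orthogonal to it, and invoke the Hjelmborg--R\o rdam criterion on a dense set of positive elements). The two halves of {\rm (i)}$\Rightarrow${\rm (ii)}, however, each contain a genuine gap.

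For fullness, your isometry argument is circular. A closed two-sided ideal $I$ of $B:=\overline{aAa}$ is invariant under multiplication by $\mathcal M(B)$ (write $x\in I$ as a limit of $e_\lambda x$ with $(e_\lambda)$ an approximate unit of $I$), so if $f_\delta(a)$ generated a proper ideal $I$, then every conjugate $v_nf_\delta(a)v_n^*$, and hence every norm-convergent weighted sum of them, would again lie in $I$; a state of $B$ annihilating $I$ then annihilates the sum, which therefore cannot be strictly positive. In other words, the strict positivity you assert is \emph{equivalent} to the fullness you are trying to prove --- stability alone does not supply a choice of isometries making it true. The paper's route avoids this: if $f_\delta(a)\in I$ with $I$ proper, then $f_\delta$ vanishes on the spectrum of $a+I$, so $\sigma(a+I)\subseteq\{0\}\cup[\delta,\|a\|]$ and the quotient $B/I$ is unital, contradicting the fact that quotients of stable C*-algebras are stable, hence non-unital.

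For proper infiniteness, you have silently assumed the key external input. The Hjelmborg--R\o rdam criterion applied in $B$ to the test element $f_\delta(a)$ produces $b\in B_+$ orthogonal to $f_\delta(a)$ (or to its $\varepsilon$-cut-down, depending on the formulation) with $(f_\delta(a)-\varepsilon)_+\precsim b$, but it gives no control placing $b$ inside $\overline{f_\delta(a)Bf_\delta(a)}$; on the contrary, orthogonality to $f_\delta(a)$ puts the natural $b$ in the complementary hereditary subalgebra, and then your chain only yields $2(f_\delta(a)-\varepsilon)_+\precsim a$, which says nothing about $f_\delta(a)$. To get $b$ inside $\overline{f_\delta(a)Bf_\delta(a)}$ you must run the criterion internally in that subalgebra, i.e.\ you must already know $\overline{f_\delta(a)Bf_\delta(a)}$ is stable --- and this is not a formality, since hereditary subalgebras of stable algebras need not be stable (a rank-one corner of $\mathcal K$ is unital). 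It is true here because $\overline{f_\delta(a)Bf_\delta(a)}=\overline{(1-g_\delta(a))B(1-g_\delta(a))}$ is the complement of an approximate-unit cut-down, and stability of such complements is precisely \cite[Corollary 4.3]{HR98}, the citation on which the paper's proof of {\rm (i)}$\Rightarrow${\rm (ii)} rests. Once that result is invoked, stability of the subalgebra gives proper infiniteness of $f_\delta(a)$ immediately via \cite[Proposition 3.7]{KR00}, and your $\varepsilon$-bookkeeping becomes unnecessary.
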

\begin{proof}
	{\rm (i) $\Rightarrow$ (ii)} Let $\delta >0$ be given. Then, $f_\delta(a)=(1-g_\delta(a))a(1-g_\delta(a))$ is stable (and thus properly infinite) by \cite[Corollary 4.3]{HR98} (notice separability is not needed in the proof, only $\sigma$-unitality). Suppose for contradiction that $f_\delta(a)$ is not full, and let $I$ be the two-sided closed ideal it generates. Then the spectrum of $a+I$ is contained in $[\delta,\|a\|]$, so $\overline{aAa}/I$ is unital. This contradicts stability of $a$, indeed every quotient of an stable C*-algebra is also stable, and thus $f_\delta(a)$ must be full for all $\delta>0$.
	
	{\rm (ii) $\Rightarrow$ (iii)} This follows from \cite[Proposition 3.5]{KR00}.
	
{\rm (iii) $\Rightarrow$ (i)}
Let $x\in \overline{aAa}$ and $\varepsilon>0$. Since $\{g_\delta(a)\}$ is an approximate identity for $\overline{aAa}$, pick $\delta>0$ such that $\|g_{2\delta}(a)xg_{2\delta}(a)-x\|<\varepsilon/2$, and define $x_0:=g_{2\delta}(a)xg_{2\delta}(a)$. By assumptions, $f_\delta(a)$ Cuntz-dominates $x_0$; hence, there exists $y$ such that $$y^*y=(x_0-\varepsilon/2)_+\,\,\text{ and }yy^*\in \overline{f_\delta(a)Af_\delta(a)}.$$
Recall that by construction, one has $f_\delta(a)\perp g_{2\delta}(a)$; therefore, $yy^*\perp y^*y$ with $\|y^*y- x\|<\varepsilon$. By the criteria described in \cite[Theorem 2.2]{HR98}, one obtains that $\overline{aAa}$ is stable as desired.
\end{proof}

After settling the background of stable elements, let us show the relation between the comparison properties exposed in Section \ref{Sec:Preliminaries} and stable elements. Let us start by the following result.

\begin{corollary}
	Let $A$ be a simple separable stably finite C*-algebra such that $\Cu(A)$ satisfies property $\mathrm{(QQ)}$. Then, $A$ is $\Cu$-stable.
\end{corollary}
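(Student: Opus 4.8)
The plan is to combine the characterization of stable elements from Lemma~\ref{lem:Stable} with the definition of property (QQ). I want to show that $A$ is $\Cu$-stable, which means that every representative $a \in (A \otimes \mathcal K)_+$ of $\infty \in \Cu(A)$ gives a stable hereditary subalgebra $\overline{a(A\otimes\mathcal K)a}$. By Lemma~\ref{lem:Stable}, it suffices to prove that for each such $a$ and each $\delta > 0$, the element $f_\delta(a)$ is properly infinite and full in $\overline{a(A\otimes\mathcal K)a}$.

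First I would fix a representative $a$ with $\langle a \rangle = \infty$ and a fixed $\delta > 0$. The decomposition $a \sim f_\delta(a) \oplus g_{2\delta}(a)$ (coming from the orthogonality $f_\delta(a)\perp g_{2\delta}(a)$ and the functional-calculus relations) gives $\infty = \langle a \rangle = \langle f_\delta(a) \rangle + \langle g_{2\delta}(a) \rangle$ in $\Cu(A)$. Since $\infty$ is the largest element of a simple $\Cu$-semigroup, and a sum equals $\infty$, I would argue that some multiple of $\langle f_\delta(a)\rangle$ is already $\infty$: indeed, because $A$ is simple, $f_\delta(a)$ is a nonzero (hence full) element, so $\langle g_{2\delta}(a)\rangle \leq \langle f_\delta(a)\rangle^{\bar\propto}$, i.e. $g_{2\delta}(a) \propto f_\delta(a)$, whence $\infty \leq \langle f_\delta(a)\rangle + n\langle f_\delta(a)\rangle = (n+1)\langle f_\delta(a)\rangle$ for some $n$, so a multiple of $\langle f_\delta(a)\rangle$ equals $\infty$ and is therefore properly infinite.

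Now property (QQ) enters: since some multiple of $\langle f_\delta(a)\rangle$ is properly infinite, property (QQ) forces $\langle f_\delta(a)\rangle$ itself to be properly infinite, i.e. $\langle f_\delta(a)\rangle = \infty$. In particular $2\langle f_\delta(a)\rangle \leq \langle f_\delta(a)\rangle$, so $f_\delta(a)$ is a properly infinite element; and it is full since $\langle f_\delta(a)\rangle = \infty$ is the largest element, so it Cuntz-dominates everything, in particular every element of $\overline{a(A\otimes\mathcal K)a}$. This verifies condition (ii) (equivalently (iii)) of Lemma~\ref{lem:Stable} for every $\delta > 0$, and hence $a$ is stable. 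As $a$ was an arbitrary representative of $\infty$, the algebra $A$ is $\Cu$-stable.

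The main obstacle I anticipate is the bookkeeping in the second paragraph: correctly translating the orthogonal decomposition $a \sim f_\delta(a)\oplus g_{2\delta}(a)$ into a clean Cuntz-semigroup identity, and making precise why fullness of $f_\delta(a)$ (guaranteed by simplicity) yields that a \emph{multiple} of $\langle f_\delta(a)\rangle$ reaches $\infty$. One must be careful that $f_\delta(a)$ is genuinely nonzero for every $\delta>0$ (which holds because $\langle a\rangle = \infty$ prevents the spectrum of $a$ from being bounded away from $0$ modulo any proper ideal in the simple setting), and that simplicity of $A$ is exactly what supplies $\langle g_{2\delta}(a)\rangle \propto \langle f_\delta(a)\rangle$. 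Once that comparison is in hand, property (QQ) does the rest essentially for free.
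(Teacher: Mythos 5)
Your overall route is the same as the paper's: decompose $a$ via $f_\delta$ and a $g$-type function, use simplicity to absorb the $g$-part into finitely many copies of $f_\delta(a)$, conclude $n\la f_\delta(a)\ra=\infty$, invoke property $\mathrm{(QQ)}$, and finish with Lemma \ref{lem:Stable}. However, two steps need repair. First, the claimed decomposition $a\sim f_\delta(a)\oplus g_{2\delta}(a)$ is false in general: $f_\delta$ vanishes on $[\delta,\infty)$ and $g_{2\delta}$ vanishes on $[0,\delta]$, so the sum $f_\delta+g_{2\delta}$ vanishes at $t=\delta$. Whenever $\delta\in\mathrm{sp}(a)$, the support criterion for Cuntz comparison in commutative C*-algebras gives $a\not\precsim f_\delta(a)+g_{2\delta}(a)$, and since $\mathrm{sp}(a)$ may contain all small $\delta$, you cannot escape by perturbing $\delta$ (even with the monotonicity $f_{\delta'}(a)\precsim f_\delta(a)$ for $\delta'\leq\delta$). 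You chose the orthogonal pair $(f_\delta,g_{2\delta})$, but orthogonality is irrelevant for the direction you actually need, because $x+y\precsim x\oplus y$ holds for \emph{arbitrary} positive $x,y$. The paper instead uses the overlapping pair: $f_\delta+g_\delta$ is strictly positive on $(0,\infty)$, whence $a\precsim f_\delta(a)+g_\delta(a)\precsim f_\delta(a)\oplus g_\delta(a)$, after which the rest of your argument (simplicity gives $\la g_\delta(a)\ra\leq (n-1)\la f_\delta(a)\ra$, hence $\la a\ra\leq n\la f_\delta(a)\ra$, then $\mathrm{(QQ)}$) goes through verbatim.

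Second, your justification that $f_\delta(a)\neq 0$ for all $\delta>0$ is where the stably finite hypothesis must enter, and your stated reason is incorrect: $\la a\ra=\infty$ alone does not prevent the spectrum of $a$ from being bounded away from $0$ --- in a purely infinite simple C*-algebra any nonzero projection represents $\infty$, and its spectrum is $\{0,1\}$. The correct argument, as in the paper, is that stable finiteness forces $\infty$ not to be compact (\cite[Proposition 5.2.10]{APT14}); if $f_\delta(a)=0$ for some $\delta>0$, then $\mathrm{sp}(a)\subseteq\{0\}\cup[\delta,\infty)$, so $\la a\ra=\la (a-\delta/2)_+\ra\ll\la a\ra$, which would make $\infty$ compact, a contradiction. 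As written, your proof never actually invokes the stably finite hypothesis, which is a signal that this step is missing. With these two corrections your proposal coincides with the paper's proof.
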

\begin{proof}
	Let $a\in A\otimes\mathcal K$ be positive element such that $\la a\ra=\infty$ in $\Cu(A)$; namely, $a$ is a properly infinite element in $A\otimes\mathcal K$. Since $A$ is stably finite, it follows that $\infty\not\ll\infty$  by \cite[Proposition 5.2.10]{APT14}; therefore, $f_\delta(a)\neq 0$ for all $\delta >0$ (following notation of Lemma \ref{lem:Stable}).
	
	Using the fact that the Cuntz subequivalence is implemented by inclusion of supports in commutative C*-algebras, it follows that $a\precsim f_\delta(a)+ g_\delta(a)\precsim f_\delta(a)\oplus g_\delta(a)$ for all $\delta>0$. We can use simplicity of $\Cu(A)$ to find $n\in\mathbb N$ such that $\la g_\delta(a)\ra\leq (n-1)\la f_\delta(a)\ra$; then, $\la a\ra\leq n\la f_\delta(a)\ra$. Using property $\mathrm{(QQ)}$, one has that $\la f_\delta(a)\ra$ is equal to $\infty$ in $\Cu(A)$, and so $f_\delta(a)$ is properly infinite for all $\delta$. Therefore, $a$ is stable by Lemma \ref{lem:Stable}.
	\end{proof}
Notice that $\Cu$-stability of $A$ is a natural property to ask for any stably finite C*-algebra. Indeed, looking at $\Cu(A)$ from the Hilbert $A$-modules picture (see \cite{CEI08} for further details about this approach), $\Cu$-stability of $A$ asks for all Hilbert A-modules associated to different representatives of $\infty\in\Cu(A)$ to be isomorphic to $\ell^2(A\otimes\mathcal K)$. It is well-known that the Cuntz equivalence between Hilbert $A$-modules does not imply isomorphism between them (see \cite{BC09} for a concrete counterexample); however, that property holds under stable rank one assumption. Another extra property arising from stable rank one assumption is that $\Cu(A)$ becomes weak cancellative (as shown in \cite[Proposition 4.2]{RW10}). Next lemma uses the approximation displayed by R\o rdam-Winter to show that $\Cu$-stability of $A$ also gives us cancellation for big elements on $\Cu(A)$. 

	\begin{lemma}
	Let $A$ be a separable stably finite C*-algebra and $a,b,p\in (A\otimes\mathcal K)_+$ with $p$ a projection. Assume further that $b\oplus p$ is an stable element in $A\otimes\mathcal K$. If $$a\oplus p\precsim b\oplus p,$$  then $a\precsim b$. 
\end{lemma}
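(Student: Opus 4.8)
The plan is to reduce the whole statement to a single equivalence, namely that stability of $b\oplus p$ already forces $b\oplus p\sim b$; granting this, the cancellation becomes a one-line transitivity argument. The key observation driving everything is that a projection contributes nothing to the bottom of the spectrum, so the cut-down $f_\delta$ simply erases the summand $p$.

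First I would record that, since $p$ is a projection, its spectrum is contained in $\{0,1\}$. Hence for every $\delta\in(0,1]$ one has $f_\delta(p)=0$, because $f_\delta(0)=0$ and $f_\delta(1)=0$ (here $1\geq\delta$ places $1$ in the region where $f_\delta$ vanishes). As $b\oplus p$ is block diagonal and $f_\delta(0)=0$, continuous functional calculus splits along the blocks, giving
$$f_\delta(b\oplus p)=f_\delta(b)\oplus f_\delta(p)=f_\delta(b)$$
for all such $\delta$.

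Next I would invoke Lemma~\ref{lem:Stable}. Since $b\oplus p$ is stable, part~(iii) says that $f_\delta(b\oplus p)$ Cuntz dominates every element of $\overline{(b\oplus p)(A\otimes\mathcal K)(b\oplus p)}$, and in particular it dominates $b\oplus p$ itself, which lies in that hereditary subalgebra. Combined with the previous step this reads $b\oplus p\precsim f_\delta(b)$. Conversely $f_\delta(b)\precsim b\precsim b\oplus p$ trivially, the first inequality because $f_\delta(b)\in\overline{b(A\otimes\mathcal K)b}$, the second because $b\oplus 0\leq b\oplus p$. Therefore $b\oplus p\sim b$.

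Finally, from the hypothesis together with $a\precsim a\oplus p$ I would chain
$$a\precsim a\oplus p\precsim b\oplus p\sim b,$$
which yields $a\precsim b$, as desired. The genuine content sits in the third paragraph: the splitting $f_\delta(b\oplus p)=f_\delta(b)$ (which is exactly where projectivity of $p$ is used) together with the fact, supplied by Lemma~\ref{lem:Stable}, that a stable element is Cuntz equivalent to every small initial piece of its own spectrum. This is the step I expect to require the most care to phrase correctly, and it is what lets me bypass a direct appeal to weak cancellation; an alternative route would push the R\o rdam--Winter approximation through to deduce weak cancellation from stability of $b\oplus p$, but the reduction above seems more economical.
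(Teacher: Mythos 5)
Your proof is correct, and it takes a genuinely different (and shorter) route than the paper's. You establish the stronger statement that stability of $b\oplus p$ alone forces $b\oplus p\sim b$: since $p$ is a projection, $f_\delta(b\oplus p)=f_\delta(b)\oplus f_\delta(p)=f_\delta(b)$ for $\delta\in(0,1]$, and Lemma~\ref{lem:Stable}\,(iii), applied to the stable element $b\oplus p$ (which lies in its own hereditary subalgebra), gives $b\oplus p\precsim f_\delta(b\oplus p)=f_\delta(b)\precsim b$; the cancellation is then pure transitivity. This is essentially the $f_{\delta}(p)=0$ trick that the paper itself deploys later, inside the proof of Theorem~\ref{Thm:stableCSE}, but you apply it at the level of Cuntz comparison rather than proper infiniteness. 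The paper's own proof of the lemma is quite different: it uses the fact that Cuntz subequivalence in a stable C*-algebra is unitarily implemented (\cite[Corollary 2.56]{Th}) to conjugate $(a-\varepsilon)_++p$ into $\overline{(b+p)(A\otimes\mathcal K)(b+p)}$, identifies this hereditary subalgebra with a stable algebra $B\otimes\mathcal K$ using the stability hypothesis, upgrades the Murray--von Neumann equivalence $upu^*\sim p$ to a Peligrad--Szid\'o equivalence, extends it to a unitary $v\in\mathcal M(B\otimes\mathcal K)$ via \cite[Corollary 1.11]{BTZ18}, and finally uses orthogonality to $p$ (where projectivity of $p$ again enters, since $C^*(p)=\mathbb{C}p$) to place $v^*u(a-\varepsilon)_+u^*v$ inside $\overline{b(A\otimes\mathcal K)b}$. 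Your route buys economy and slightly more generality: it nowhere uses stable finiteness or separability of $A$ (only the $\sigma$-unitality automatic for singly generated hereditary subalgebras, which is all Lemma~\ref{lem:Stable} needs), and it yields the stronger conclusions $b\oplus p\sim b$ and hence $\la a\ra+\la p\ra\leq\la b\ra$, not merely $a\precsim b$. What the paper's heavier argument buys is a genuine weak-cancellation template: it produces explicit unitaries witnessing the comparison, which is the shape of argument one would need in situations where the absorption $b\oplus p\sim b$ is not available or where one must keep track of the implementing maps.
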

\begin{proof}
	Let us assume, without loss of generality, that $a,b$ and $p$ are orthogonal elements in $A\otimes\mathcal K$. By \cite[Corollary 2.56]{Th}, the Cuntz comparison for stable C*-algebras is unitarily implemented. Namely, there exists a unitary in the unitarization of $A\otimes\mathcal K$ such that $$u((a-\varepsilon)_++p)u^*\in \overline{(b+p)(A\otimes\mathcal K)(b+p)} =:B\otimes \mathcal K.$$
	Notice the latter equality holds due to the fact that $(b+p)$ is an stable element by assumptions.
	
	Now, $upu^*$ and $p$ are Murray-von Neumann equivalent in $B\otimes\mathcal K$; therefore they are also Peligrad-Szidó equivalent (see \cite{ORT}). Using \cite[Corollary 1.11]{BTZ18}, we extend the isometry defining the Peligrad-Szidó equivalence between $p$ and $upu^* $ to a unitary $v$ in $\mathcal M(B\otimes \mathcal K)$ satisfying that $vpv^*=upu^*$ in $B\otimes\mathcal K$.
	
	Now, we have that	
	$$v^*u(a-\varepsilon)_+u^*v\in B\otimes\mathcal K,\,\,v^*u(a-\varepsilon)_+u^*v\perp, v^*upu^*v=p, $$ 
	which provides that $v^*u(a-\varepsilon)_+u^*v$ belongs to $\overline{b(A\otimes\mathcal K)b}$. This shows that $(a-\varepsilon)_+\precsim b$; and as $\varepsilon>0$ was arbitrary, we conclude $a\precsim b$.
\end{proof}

A property of cancellation for big elements in $\Cu$-semigroups was already introduced in \cite{BP18}. Indeed, in the simple case, we say that a $\Cu$-semigroup $S$ satisfies Cancellation of Small elements at Infinity (CSE$\infty$ for short)  whether $x+y=\infty$ with $x\ll \infty$, implies that $y=\infty$. Next result shows the equivalence between the two notions under study: $\Cu$-stability of $A$ and CSE$\infty$. In particular, it characterizes cancellation of big elements in $\Cu(A)$. In order to show that, we need to ensure the existence of a projection in $A\otimes\mathcal K$. That is the reason why we assume the algebra to be unital.

\begin{theorem}\label{Thm:stableCSE}
	Let $A$ be a unital  separable simple C*-algebra. Then $A$ is $\Cu$-stable if and only if $\Cu(A)$ satisfies CSE$\infty$. In particular, $A$ is $\Cu$-stable implies that it is stably finite.
\end{theorem}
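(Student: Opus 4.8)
The plan is to prove the equivalence by going through both directions, plus establishing the ``in particular'' clause about stable finiteness.

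\smallskip

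\noindent\textbf{Stable finiteness.} First I would dispense with the last sentence, since it will be needed anyway to make sense of the $\Cu$-stability direction. If $A$ were not stably finite, then by simplicity $\Cu(A)$ would be purely infinite, i.e. $\Cu(A)=\{0,\infty\}$ with $\infty$ compact (by \cite[Proposition 5.2.10]{APT14}). In that case $\infty\ll\infty$, so for any representative $a$ of $\infty$ one has $f_\delta(a)=0$ for small enough $\delta$ (the spectrum of $a$ can be separated from $0$ up to Cuntz equivalence), and then criterion (ii) of Lemma \ref{lem:Stable} fails, so $a$ is not stable. Hence $\Cu$-stability forces stable finiteness. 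I would therefore assume $A$ stably finite (equivalently $\infty\not\ll\infty$) from here on, which guarantees $f_\delta(a)\neq 0$ for all $\delta>0$ whenever $\la a\ra=\infty$.

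\smallskip

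\noindent\textbf{$\Cu$-stable $\Rightarrow$ CSE$\infty$.} Suppose $x+y=\infty$ with $x\ll\infty$, and I want $y=\infty$. The idea is to realize $x$ by a projection, which is exactly where unitality of $A$ is used: since $x\ll\infty$ and $A$ is unital, I can choose a representative of $x$ dominated (up to $\ll$) by a multiple of the unit, hence find a projection $p\in A\otimes\mathcal K$ with $x\ll\la p\ra$ and still $\la p\ra+y=\infty$ (absorbing the finite gap into $y$ using simplicity). Now pick orthogonal positive $b,p$ with $\la b\ra=y$; then $\la p\oplus b\ra=\infty$, so by $\Cu$-stability $p\oplus b$ is a stable element. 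The cancellation lemma (the unnamed lemma immediately preceding the theorem) applied with $a$ a representative of $\infty$ gives, from $a\oplus p\precsim (p\oplus b)\oplus p\sim p\oplus b$, that $a\precsim b$, whence $y=\la b\ra\geq\la a\ra=\infty$, i.e. $y=\infty$.

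\smallskip

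\noindent\textbf{CSE$\infty$ $\Rightarrow$ $\Cu$-stable.} Conversely, take $a\in(A\otimes\mathcal K)_+$ with $\la a\ra=\infty$; I must show $\overline{a(A\otimes\mathcal K)a}$ is stable, and by Lemma \ref{lem:Stable}(ii) it suffices to show $f_\delta(a)$ is properly infinite and full for every $\delta>0$. Fullness is automatic from simplicity. For proper infiniteness, fix $\delta>0$ and use the decomposition $\infty=\la a\ra\leq\la f_\delta(a)\ra+\la g_\delta(a)\ra$ coming from $a\precsim f_\delta(a)\oplus g_\delta(a)$. Since $\la g_\delta(a)\ra\ll\infty$ by stable finiteness (it is way-below $\infty$), CSE$\infty$ applied to $\la g_\delta(a)\ra+\la f_\delta(a)\ra=\infty$ yields $\la f_\delta(a)\ra=\infty$; a properly infinite Cuntz class means $f_\delta(a)$ is itself properly infinite. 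As $\delta$ was arbitrary, Lemma \ref{lem:Stable} gives that $a$ is stable.

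\smallskip

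\noindent The step I expect to be the main obstacle is producing the projection $p$ in the first direction: turning the abstract hypothesis $x\ll\infty$ into an honest projection $p\in A\otimes\mathcal K$ with $x\leq\la p\ra$ while keeping $\la p\ra+y=\infty$, and arranging $b,p$ orthogonal so that the preceding cancellation lemma applies verbatim. This is precisely why unitality is imposed, and verifying that the ``finite discrepancy'' between $x$ and $\la p\ra$ can be safely absorbed (using simplicity, so that any compact element is $\propto$ any nonzero element) is the delicate bookkeeping of the argument.
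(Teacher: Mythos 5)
Your two main implications are essentially sound, but your opening step---deriving stable finiteness from $\Cu$-stability---contains a genuine error. You claim that if $\infty\ll\infty$ then \emph{every} representative $a$ of $\infty$ satisfies $f_\delta(a)=0$ for small $\delta$, hence is non-stable. Both halves fail: compactness of the class $\la a\ra$ says nothing about the spectrum of a particular representative, and a gapless representative of a compact $\infty$ can perfectly well be stable. Concretely, in $\mathcal O_2\otimes\mathcal K$ the element $\infty\in\Cu(\mathcal O_2)$ is compact, yet a strictly positive element $c$ of $\mathcal O_2\otimes\mathcal K$ has $0$ as a non-isolated point of its spectrum, so $f_\delta(c)\neq 0$ for all $\delta>0$, and $c$ \emph{is} stable, since $\overline{c(\mathcal O_2\otimes\mathcal K)c}=\mathcal O_2\otimes\mathcal K$. (Your parenthetical ``not stably finite $\Rightarrow$ $\Cu(A)=\{0,\infty\}$'' is also false for simple algebras---R\o rdam's unital simple algebra with both finite and infinite projections has compact $\infty$ but many Cuntz classes---though you only use $\infty\ll\infty$.) To contradict $\Cu$-stability you must exhibit \emph{one} bad representative, and this is exactly where unitality enters, as in the paper: since $\sup_n n\la 1_A\ra=\infty$ and $\infty$ is compact, one gets $n\la 1_A\ra=\infty$ for some $n$, so the projection $1_{M_n}\otimes 1_A$ represents $\infty$; its hereditary subalgebra $M_n(A)$ is unital, hence not stable (equivalently $f_\delta(1_{M_n}\otimes 1_A)=0$ for small $\delta$, violating Lemma \ref{lem:Stable}(ii)).

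A second, smaller, gap: you license ``assume $A$ stably finite from here on'' only by the $\Cu$-stability hypothesis, but in the direction CSE$\infty$ $\Rightarrow$ $\Cu$-stable that hypothesis is unavailable, and you genuinely use stable finiteness there to guarantee $f_\delta(a)\neq 0$ (if $f_\delta(a)=0$ for some $\delta$, then $\overline{a(A\otimes\mathcal K)a}$ is unital and the conclusion would be false). The fix is one line you should state: if $\infty\ll\infty$, apply CSE$\infty$ to $\infty+0=\infty$ with $x=\infty\ll\infty$ to get $0=\infty$, absurd; so CSE$\infty$ itself forces stable finiteness. Also, $\la g_\delta(a)\ra\ll\infty$ is not ``by stable finiteness'': it holds because $g_\delta$ vanishes on $[0,\delta/2]$, so $g_\delta(a)\precsim (a-\delta/2)_+$ and $\la (a-\delta/2)_+\ra\ll\la a\ra$.

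With these repairs the proof works, and your $\Cu$-stable $\Rightarrow$ CSE$\infty$ direction takes a genuinely different route from the paper's. You invoke the cancellation lemma preceding the theorem: with $a$ a representative of $\infty$, $p=1_{M_n}\otimes 1_A$, and $b\perp p$ representing $y$, the element $b\oplus p$ represents $\infty$ and is therefore stable, so $a\oplus p\precsim b\oplus p$ cancels to $a\precsim b$, giving $y\geq\infty$. The paper never uses that lemma here; instead it splits by functional calculus, $f_\delta(p\oplus b)=f_\delta(p)\oplus f_\delta(b)$, notes $f_\delta(p)=0$ for small $\delta$, and lets Lemma \ref{lem:Stable} force $f_\delta(b)$ to be properly infinite, so that $b$ is a stable element. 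Your route is a legitimate use of machinery the paper sets up but leaves idle, at the cost of importing the unitary and multiplier-algebra techniques hidden in the cancellation lemma; the paper's route is more elementary, staying entirely inside Lemma \ref{lem:Stable}. Both reductions to a projection representative via $x\leq n\la 1_A\ra$ (your ``absorbing the finite gap'' is automatic, since $\la p\ra+y\geq x+y=\infty$) coincide with the paper's.
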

\begin{proof}
	For the "if" direction, let us consider a properly infinite element, denoted by  $a$, such that $f_\delta(a)\neq 0$ for all $\delta>0$. By construction it follows that $\infty=\la a\ra\leq \la f_\delta(a)\ra \oplus \la g_\delta(a)\ra$, with $\la g_\delta(a)\ra \ll \la a\ra$. By (CSE$\infty$), one has that $\la f_\delta(a)\ra$ is properly infinite for all $\delta>0$; hence, Lemma \ref{lem:Stable} implies that $a$ is stable as desired.
	
	For the converse direction, let us first check that $\Cu$-stability of $A$ implies it is stably finite. Then, we will show the desired implication in this case.
	
	 If $A$ was neither stably finite neither purely infinite, for any $x\in \Cu(A)$ there exists $n_x\in\mathbb N$ such that $n_xx=\infty$ (see \cite{BP18} for further details). Hence, considering the Cuntz class of the unit on $A$, we have that $n_{1}\la1_A\ra=\infty$ for some $n_1\in\mathbb N$. By $\Cu$-stability of $A$, one has that $1_{M_{n_1}}\otimes 1_A$ is a stable element. Hence, by Lemma \ref{lem:Stable}, $f_\delta(1_{M_{n_1}}\otimes 1_A)=1_{M_{n_1}}\otimes f_\delta(1_A)$ is properly infinite for all $\delta>0$. Since $1_A$ is a projection, there exists $\delta'>0$ such that $f_{\delta'}(1_A)=0$, what provides a contradiction since we requested  $f_\delta(1_A)\neq 0$ for all $\delta>0$.
	 
	  Now, assuming $A$ is stably finite, let us show that $\Cu$-stability of $A$ implies (CSE$\infty$). To this end, let $x+y=\infty$ with $x\ll\infty$. As before, assuming $A$ is unital, we may further assume that $x \leq n\la 1_A\ra$  for some $n$; hence, $x$ may be considered a compact element in $\Cu(A)$. Let a projection $p$ be one of its representatives (\cite{BC09}). This, indeed, can be done due $x\ll\infty$ and $\Cu(A)$ is simple.
	
	 Since any representative of the above Cuntz class is stable, denoting $x=\la p\ra$ and $y=\la b\ra$, one has that $p\oplus b$ is an stable element in $A\otimes \mathcal K$. Applying functional calculus to $p\oplus b$, one has that $f_\delta(p\oplus b)=f_\delta(p)\oplus f_\delta(b)$ for all $\delta>0$. Hence, by Lemma \ref{lem:Stable} $f_\delta(p)\oplus f_\delta(b)$ is properly infinite for all $\delta>0$. Since $p$ is a projection, there exists $\delta_0>0$ such that $f_{\delta_0}(p)=0$; therefore, $f_{\delta'}(p)\oplus f_{\delta'}(b)=f_{\delta'}(b)$ is properly infinite for all $\delta'<\delta_0$. That implies that $b$ is an stable element by Lemma \ref{lem:Stable}.	
	\end{proof}

As stated in \cite{BP18}, it is natural to wonder the following:
\begin{question}\label{q1}
	Does any separable simple and stably finite C*-algebra $A$ satisfy that it is $\Cu$-stable?
\end{question}
\begin{remark}
	Using the equivalence of both properties described in Theorem \ref{Thm:stableCSE}, one observes that Question \ref{q1} wonders about the uniqueness of orthogonal complement on  Hilbert $A$-modules. Indeed, looking at $\Cu(A)$ from the Hilbert $A$-modules picture described in \cite{CEI08}, $\Cu$-stability of $A$ says that whether $x+y=\infty$ in $\Cu(A)$ with $x\ll\infty$, then $y=\infty$. Namely, if $x=\la a\ra$ and $y=\la b\ra$, for $a,b\in (A\otimes\mathcal K)_+$, stability gives us: $$E_a\oplus E_b\sim \ell^2(A\otimes \mathcal K)\Rightarrow E_a\oplus E_b\cong \ell^2(A\otimes \mathcal K) \iff E_b\cong \ell^2(A\otimes \mathcal K).$$
	The above fact holds in the stable rank one setting. Indeed, in this framework the Cuntz equivalence is described by isomorphism of the associated Hilbert $A$-modules as shown in \cite{ORT}, and we have a weak cancellation by \cite{RW10}. Notice that $\Cu$-stability of $A$ asks for a "weak" version of stable rank one since we just need the above properties on the largest element of $\Cu(A)$. We express it in the next corollary.
\end{remark}
\begin{corollary}
Let $A$ be a  unital separable simple stably finite C*-algebra, and $a\in (A\otimes \mathcal K)_+$. Then, the following are equivalent:
\begin{itemize}
	\item orthogonal complement of $E_a$ on $\ell^2(A\otimes \mathcal K)$ is unique (up to isomorphism).
	\item Cuntz equivalence at $\infty\in \Cu(A)$ is induced by isomorphism of Hilbert $(A\otimes\mathcal K)$-modules.
	\item $A$ is $\Cu$-stable.
\end{itemize} 
\end{corollary}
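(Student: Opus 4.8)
The plan is to reduce all three conditions to statements about the largest element $\infty\in\Cu(A)$ and then invoke Theorem \ref{Thm:stableCSE}. Two translations do the bulk of the work. First, for any $a\in(A\otimes\mathcal K)_+$ with $\la a\ra=\infty$, simplicity of $A$ makes $\overline{a(A\otimes\mathcal K)a}$ a full hereditary subalgebra, and the Hilbert-module/stability dictionary gives that $\overline{a(A\otimes\mathcal K)a}$ is stable if and only if $E_a\cong\ell^2(A\otimes\mathcal K)$; here the strictly positive element of $A\otimes\mathcal K$ is the distinguished representative of $\infty$ whose module is $\ell^2(A\otimes\mathcal K)$. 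Second, an isomorphism of Hilbert modules implies their Cuntz equivalence, while the converse is precisely the phenomenon controlled by $\Cu$-stability. Throughout I will take $a$ with $\la a\ra\ll\infty$, since for $\la a\ra=\infty$ absorption makes the orthogonal complement highly non-unique and the first condition degenerates; this compact case, represented (using that $A$ is unital) by a projection $p$, is the meaningful one.

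The equivalence between the second and third conditions is then almost a restatement: $A$ is $\Cu$-stable exactly when every representative $a$ of $\infty$ has $\overline{a(A\otimes\mathcal K)a}$ stable, which by the first translation means $E_a\cong\ell^2(A\otimes\mathcal K)$ for every such $a$; since $\ell^2(A\otimes\mathcal K)$ itself represents $\infty$, this is exactly the assertion that Cuntz equivalence at $\infty$ is implemented by isomorphism of the associated modules. For the implication ``third $\Rightarrow$ first'' I will run the chain displayed in the Remark preceding the corollary. If $F$ is any orthogonal complement of $E_a$, then $E_a\oplus F\cong\ell^2(A\otimes\mathcal K)$, so $\la a\ra+\la f\ra=\infty$ with $\la a\ra\ll\infty$; Theorem \ref{Thm:stableCSE} turns $\Cu$-stability into CSE$\infty$, forcing $\la f\ra=\infty$, and the second condition upgrades this to $F\cong\ell^2(A\otimes\mathcal K)$. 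Since every complement is thus isomorphic to the standard module, the complement of $E_a$ is unique up to isomorphism.

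The converse ``first $\Rightarrow$ third'' is where the real work lies. Following the proof of Theorem \ref{Thm:stableCSE}, unitality together with simplicity lets me reduce CSE$\infty$ to the single relation $\la p\ra+\la b\ra=\infty\Rightarrow\la b\ra=\infty$ for $p$ a fixed nonzero projection (a multiple of $1_A$), so uniqueness of the complement of one $E_p$ should suffice. The main obstacle is exactly the gap between the two translations above: the hypothesis $\la p\ra+\la b\ra=\infty$ only yields a Cuntz equivalence $E_p\oplus E_b\sim\ell^2(A\otimes\mathcal K)$, whereas the first condition is an isomorphism-level cancellation statement. I expect to bridge this either through the contrapositive -- a failure of CSE$\infty$ should produce two genuinely non-isomorphic complements of $E_p$ -- or by passing to the multiplier-projection picture and Brown's criterion (a stable element being the one whose multiplier projection is Murray--von Neumann equivalent to the unit), where the Cuntz-to-isomorphism upgrade becomes an equivalence of projections in $\mathcal M(A\otimes\mathcal K)$. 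Closing this $\sim$-versus-$\cong$ gap cleanly is the crux of the argument.
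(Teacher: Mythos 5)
Your reductions for ``second $\Leftrightarrow$ third'' and ``third $\Rightarrow$ first'' are correct and match what the paper intends (the paper supplies no written proof; the corollary is meant to follow from the displayed chain in the remark preceding it together with Theorem \ref{Thm:stableCSE}, exactly as you run it). The problem is that your proposal stops short of proving ``first $\Rightarrow$ third'': you name the $\sim$-versus-$\cong$ gap and offer two candidate bridges, but under your isomorphism-level reading of ``orthogonal complement'' both bridges fail for the same structural reason, so this is a genuine gap and not just a presentational one. From $\la p\ra+\la b\ra=\infty$ you can only conclude $E_p\oplus E_b\sim\ell^2(A\otimes\mathcal K)$; to exhibit $E_b$ as an honest complement of $E_p$ you would need the isomorphism $E_p\oplus E_b\cong\ell^2(A\otimes\mathcal K)$, and since $p\oplus b$ represents $\infty$, that isomorphism is precisely an instance of the $\Cu$-stability you are trying to establish --- circular. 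The contrapositive route stumbles identically: when CSE$\infty$ fails via $\la p\ra+\la b\ra=\infty$, $\la b\ra\neq\infty$, nothing guarantees that $E_b$ occurs as an actual orthogonal summand of $\ell^2(A\otimes\mathcal K)$ complementing $E_p$, so no second, non-isomorphic complement is produced and condition (1) is never contradicted. The Brown/multiplier-projection route is the same obstruction in disguise: Theorem \ref{ThmBrown} yields $P\sim 1_{\mathcal M(A\otimes\mathcal K)}$ only when the hereditary subalgebra is already stable, which is again the conclusion, not an input.

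The resolution is in the reading of condition (1), which the remark before the corollary fixes: its displayed implication has hypothesis $E_a\oplus E_b\sim\ell^2(A\otimes\mathcal K)$, i.e.\ Cuntz equivalence, not isomorphism. So ``complement of $E_a$'' is meant at the Cuntz level --- the family of all $E_b$ with $\la a\ra+\la b\ra=\infty$ --- and uniqueness asserts all of these are isomorphic to the canonical member $\ell^2(A\otimes\mathcal K)$ (which belongs to the family by Kasparov's stabilization theorem). With that reading, ``first $\Rightarrow$ third'' is immediate and needs no $\sim$-to-$\cong$ upgrade: any representative $c$ of $\infty$ satisfies $\la a\ra+\la c\ra=\infty$ by absorption, so uniqueness forces $E_c\cong\ell^2(A\otimes\mathcal K)$, i.e.\ $\overline{c(A\otimes\mathcal K)c}$ is stable, which is $\Cu$-stability; the corollary is thus a repackaging of Theorem \ref{Thm:stableCSE} rather than a new cancellation result. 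Incidentally, your own observation that honest (isomorphism-level) complements of $E_a$ are never unique when $\la a\ra=\infty$ --- Kasparov makes every countably generated module a complement of $\ell^2(A\otimes\mathcal K)$ --- is evidence that the literal isomorphism-level reading cannot be the intended one; your restriction to $\la a\ra\ll\infty$ is the right normalization, but the reading of the complement condition, not a new argument, is what closes your crux.
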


If Question \ref{q1} had an affirmative answer, it would answer  \cite[Question 3.4]{KR00} in the setting under study, as we next state. 
\begin{corollary}
	Let $A$ be a unital separable simple $\Cu$-stable C*-algebra. Then, $\mathcal M(\overline{aAa})$ is properly infinite for any properly infinite element $a$ in $A$.
\end{corollary}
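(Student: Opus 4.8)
The plan is to peel off the definitions until the statement reduces to the single standard fact that the multiplier algebra of a stable C*-algebra is properly infinite. First, because $A$ is simple, $\Cu(A)$ is a simple $\Cu$-semigroup whose only properly infinite element is $\infty$; so saying that a positive element $a\in(A\otimes\mathcal K)_+$ is properly infinite amounts to $\la a\ra=\infty$, i.e. $a$ is a representative of the largest element of $\Cu(A)$. (Here I read ``$a$ in $A$'' as $a\in(A\otimes\mathcal K)_+$; this is forced by the setup, since $\Cu$-stability implies $A$ is stably finite by Theorem~\ref{Thm:stableCSE}, and then no positive element of the unital algebra $A$ itself is properly infinite, as $\la a\ra\leq\la 1_A\ra\ll\infty$.) By the very definition of $\Cu$-stability, every representative of $\infty$ is a stable element, so $B:=\overline{a(A\otimes\mathcal K)a}$ is a stable C*-algebra.

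It then remains to show that $\mathcal{M}(B)$ is properly infinite whenever $B$ is stable, and here I would argue directly from the stable structure. Since $B\cong B\otimes\mathcal K$, the canonical unital embedding $\mathcal{M}(\mathcal K)=\mathcal{B}(H)\hookrightarrow\mathcal{M}(B\otimes\mathcal K)$ given by $s\mapsto 1_{\mathcal{M}(B)}\otimes s$ lets me transport isometries from $\mathcal{B}(H)$ into $\mathcal{M}(B)$. Choosing $s_1,s_2\in\mathcal{B}(H)$ with $s_i^*s_i=1$ and $s_1s_1^*+s_2s_2^*=1$ (possible because $H\cong H\oplus H$), the elements $S_i:=1_{\mathcal{M}(B)}\otimes s_i$ are isometries in $\mathcal{M}(B)$ satisfying $S_i^*S_i=1$ and $S_1S_1^*+S_2S_2^*=1$. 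This exhibits the unit of $\mathcal{M}(B)$ as a properly infinite projection, which is exactly the assertion that $\mathcal{M}(B)$ is properly infinite.

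I do not expect a genuine obstacle, since all the real content is already contained in the hypothesis of $\Cu$-stability (equivalently, by Theorem~\ref{Thm:stableCSE}, in CSE$\infty$): this is what upgrades ``properly infinite'' to ``stable'' for the generating element, while the multiplier computation afterwards is purely formal. The only point deserving care is the bookkeeping noted above, namely interpreting the properly infinite element inside $A\otimes\mathcal K$ rather than $A$, so that the statement is not vacuous and genuinely records the consequence that an affirmative answer to Question~\ref{q1} would have for \cite[Question 3.4]{KR00}.
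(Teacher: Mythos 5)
Your proposal is correct and follows essentially the same route as the paper's proof: $\Cu$-stability makes the hereditary subalgebra generated by a properly infinite element stable, and then the multiplier algebra of a stable C*-algebra is properly infinite because it contains $\mathcal B(H)$ unitally (you merely make the Cuntz isometries $S_1,S_2$ explicit, where the paper cites this as a standard fact). Your bookkeeping remark that ``$a$ in $A$'' must be read as $a\in(A\otimes\mathcal K)_+$ --- since $\Cu$-stability forces stable finiteness, leaving no properly infinite elements in the unital algebra $A$ itself --- is a legitimate clarification of a point the paper glosses over, not a divergence in method.
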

\begin{proof}
	By assumptions any properly infinite element $a\in A$ is automatically stable; namely $\overline{aAa}$ is stable. The multiplier algebra of a stable C*-algebra contains the bounded operators on an infinite-dimensional Hilbert space as a unital sub-C*-algebra; therefore, it is properly infinite
\end{proof}

We finish this section relating properly infiniteness with the notion of $(\omega,n)$-decompo\-sable as defined in \cite{RobertRordam}. Let us first recall this notion.
\begin{definition}\label{Def:wnDecomposable}
Let $A$ be a C*-algebra, $n\geq 1$ be an integer, and $u$ be an element in $\Cu(A)$. We say that $u$ is $(w,n)$-decomposable if there exists $x_1,x_2,\ldots$, different than zero, in $\Cu(A)$ such that $\sum^\infty_{i=1}x_i\leq u$ and $u\leq nx_i$ for all $i$.
\end{definition}

In \cite[Lemma 9.2]{RobertRordam}, the authors give several equivalent conditions to determine whether an element is $(\omega,n)$-decomposable. These different notions are used in \cite[Proposition 9.7]{RobertRordam} to provide a result relating stable C*-algebras and $(\omega,n)$-decomposability. However, the proof of the implication {\rm (i)}$\Rightarrow${\rm (ii)} in \cite[Proposition 9.7]{RobertRordam} is wrong. Indeed, they claim to find infinite mutually orthogonal positive elements in a non-stable C*-algebra, and this can not be done in general. Note that if \cite[Proposition 9.7]{RobertRordam} was true, it would imply that properly infinite elements are always stable (answering \cite[Question 3.4]{KR00} in the general setting). Assuming $\Cu$-stability for $A$, one get the following characterization of $(\omega,n)$-decomposable elements.

\begin{lemma}\label{Lem:xdecomposable}
	Let $A$ be a simple separable $\Cu$-stable C*-algebra, and $x=\la a\ra\in \Cu(A)$. Then,  
	\begin{enumerate}
			 \item If $x$ is $(\omega,n)$-decomposable, then $nx$ is properly infinite.
		\item If $nx$ is properly infinite for some $n\in\mathbb N$, then $x$ is $(\omega,n)$-decomposable.	\end{enumerate}

\end{lemma}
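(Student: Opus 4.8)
The plan is to prove the two implications by quite different means: part (1) is a one-line computation valid in any simple $\Cu$-semigroup, whereas part (2) is where $\Cu$-stability enters, through Lemma \ref{lem:Stable}, and where the flawed step of \cite{RobertRordam} must be repaired.

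For (1) I would argue directly, without using $\Cu$-stability at all. Let $x_1,x_2,\dots$ be nonzero elements witnessing $(\omega,n)$-decomposability, so that $\sum_i x_i\le x$ and $x\le nx_i$ for every $i$. Since $x\ge x_1\neq 0$, the element $x$ is nonzero, hence full by simplicity, so $\sum_{i=1}^\infty x=\infty$. Adding the inequalities $x\le nx_i$ then gives $\infty=\sum_{i=1}^\infty x\le n\sum_i x_i\le nx$, whence $nx=\infty$ is properly infinite.

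For (2) I would write $x=\la a\ra$ and first record two facts. Since $\la a^{\oplus n}\ra=nx=\infty$, the element $a^{\oplus n}$ is a representative of $\infty$, so $\Cu$-stability makes $\overline{a^{\oplus n}(A\otimes\mathcal K)a^{\oplus n}}$ stable; as $f_\delta(a^{\oplus n})=f_\delta(a)^{\oplus n}$, Lemma \ref{lem:Stable} yields that $f_\delta(a)^{\oplus n}$ is properly infinite for every $\delta>0$, i.e. $n\la f_\delta(a)\ra=\infty$ (in particular $f_\delta(a)\neq0$) for all $\delta>0$. Secondly, for each $\e>0$ the elements $(a-\e)_+$ and $f_\e(a)$ are orthogonal and satisfy $(a-\e)_+ + f_\e(a)\le a$ on the spectrum of $a$, so that $\la(a-\e)_+\ra+\la f_\e(a)\ra\le x$. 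With these in hand I would verify the local (way-below) form of $(\omega,n)$-decomposability and feed it into the equivalences of \cite[Lemma 9.2]{RobertRordam}: given $u'\ll x$, pick $\e>0$ with $u'\le\la(a-\e)_+\ra$ and set $w:=\la f_\e(a)\ra\neq0$; then $u'+w\le x$ while $x\le\infty=nw$. Peeling a low bump $f_\e(a)$ against the bulk $(a-\e)_+$ in this way should produce, via \cite[Lemma 9.2]{RobertRordam}, the required sequence $(x_i)$, the case $x=\infty$ being included.

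The hard part is exactly the construction of a \emph{summable} sequence, and it is here that \cite{RobertRordam} erred. The naive choice $x_i=\la f_{\delta_i}(a)\ra$ satisfies $nx_i=\infty\ge x$ but its terms all live near $0$ and do not sum below $x$; conversely, mutually orthogonal spectral shells sum below $x$ but need not $n$-dominate $x$. This is the tension behind the incorrect claim of producing infinitely many orthogonal positive elements in a non-stable algebra. The way around it is not to realize all pieces simultaneously but to check the local condition above, which is legitimate precisely because $\Cu$-stability forces $n\la f_\delta(a)\ra=\infty$ for every $\delta>0$; matching this condition to the correct item of \cite[Lemma 9.2]{RobertRordam} is the delicate point I expect to require the most care.
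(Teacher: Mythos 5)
Your part (1) is correct and essentially the paper's own computation: from $\sum_i x_i\le x$ and $x\le nx_i$ one gets $\infty\cdot x\le n\sum_i x_i\le nx$, so $nx$ is properly infinite; your detour through fullness and simplicity is harmless but not needed.

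In part (2) there is a genuine gap, and it sits exactly at the point you yourself flag as ``the delicate point''. Your preparatory steps are fine: $nx$ properly infinite forces $nx=\infty$ in the simple case, so $a^{\oplus n}$ represents $\infty$ and $\Cu$-stability makes $M_n\bigl(\overline{a(A\otimes\mathcal K)a}\bigr)$ stable, whence $n\la f_\delta(a)\ra=\infty$ by Lemma \ref{lem:Stable}, and indeed $\la (a-\e)_+\ra+\la f_\e(a)\ra\le x$. But the condition you then verify --- for each $u'\ll x$ a \emph{single} nonzero $w$ with $u'+w\le x$ and $x\le nw$ --- is not one of the equivalences of \cite[Lemma 9.2]{RobertRordam}: the reformulations there all require one simultaneous sequence $(x_k)$ with $\sum_k x_k\le u$ (in item (iii), together with an increasing sequence $(y_k)$ with $\sup_k y_k=u$ and $y_k\le nx_k$), and your pairwise condition does not iterate into such a sequence. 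The witnesses $w=\la f_\e(a)\ra$ all contain the bottom of the spectrum of $a$, so no two of them can be summed below $x$, and abstractly nothing allows you to choose the next $w$ orthogonally past the previously chosen ones; this is the same tension you correctly diagnose in the flawed step of \cite[Proposition 9.7]{RobertRordam}, so deferring it to an unspecified item of Lemma 9.2 reproduces, rather than repairs, that error. The missing idea, and the way the paper closes the argument, is that stability itself manufactures the orthogonal summable family: apply \cite[Lemma 5.3]{OPR1} to the stable algebra $M_n\bigl(\overline{a(A\otimes\mathcal K)a}\bigr)$ to obtain pairwise orthogonal positive elements $a_k$ in $\overline{a(A\otimes\mathcal K)a}$ with $\la(a-1/k)_+\ra\le n\la a_k\ra$; orthogonality inside the hereditary subalgebra of $a$ gives $\sum_k\la a_k\ra\le x$, and then \cite[Lemma 9.2(iii)]{RobertRordam} applies with $u=x$, $x_k=\la a_k\ra$, $y_k=\la(a-1/k)_+\ra$. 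Since your first step already establishes the needed stability of $M_n$ of the hereditary algebra, routing through \cite[Lemma 5.3]{OPR1} completes your proof, and your $f_\e$-computations then become unnecessary.
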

\begin{proof}
	The first part is trivial from the definition of $(\omega,n)$-decomposable. Indeed, by Definition \ref{Def:wnDecomposable} one has that $n\cdot\sum^\infty_{i=1}x_i\leq n\cdot x$ and $x\leq n\cdot x_i$; hence, $$\infty\cdot x\leq n\cdot\sum^\infty_{i=1}x_i=\sum_{i=1}^\infty n\cdot x_i\leq n\cdot x,$$ showing that $nx$ is properly infinite.
	
	For the second statement, let $x\in\Cu(A)$ such that $nx$ is properly infinite for some $n\in\mathbb N$, and consider the representative of $x$ given in the statement, i.e. a positive element $a\in A\otimes K$ such that $\la a\ra=x$. By $\Cu$-stability of $A$, it follows that $a\otimes 1_n$ is an stable element; therefore, $M_{n}((A\otimes K)_a)$ is stable. Use \cite[Lemma 5.3]{OPR1} to find a sequence $\{a_k\}$ of pairwise elements in $(A\otimes K)_a$ such that $\la (a-1/k)_+\ra\leq n\la(a-1/k)_+\ra\leq  n\la a_k\ra$ for all $k$ in $\Cu((A\otimes K)_a)$. Then, considering  $u=\la a\ra$, $x_k=\la a_k\ra$ and $y_k=\la(a-1/k)_+\ra$, it follows from \cite[Lemma 9.2(iii)]{RobertRordam} that $\la a\ra$ is $(\omega, n)$-decomposable as desired.
		\end{proof}

\begin{corollary}\label{cor:CFP=QQ}
Let $A$ be a simple separable  $\Cu$-stable C*-algebra. Then, $A$ satisfies the Corona Factorization property if and only if $\Cu(A)$ satisfies the property ${\rm (QQ)}$.
\end{corollary}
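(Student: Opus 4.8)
The plan is to establish the two implications separately, using the sum-form characterizations in Definition \ref{PropCFP} and in the remark following property (QQ). The implication (QQ) $\Rightarrow$ CFP is immediate and does not even require $\Cu$-stability: given a sequence $(y_n)_n$ in $S_{\ll\infty}$ and $m$ witnessing the second characterization of CFP, that is $m\cdot\sum_{n=k}^\infty y_n=\infty$ for all $k$, I would simply read off the case $k=1$, set $u:=\sum_{n=1}^\infty y_n$ so that $mu=\infty$, and apply property (QQ) to $u$ to obtain $u=\infty$, which is exactly the conclusion sought.

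For the converse CFP $\Rightarrow$ (QQ), which is where $\Cu$-stability enters, I would take $x\in\Cu(A)$ with $mx=\infty$ for some $m$ (so $x\neq0$, since $\infty\neq0$) and aim to prove $x=\infty$. Because $mx$ is properly infinite and $A$ is $\Cu$-stable, Lemma \ref{Lem:xdecomposable}(2) supplies an $(\omega,m)$-decomposition of $x$ in the sense of Definition \ref{Def:wnDecomposable}: nonzero $x_1,x_2,\ldots$ with $\sum_i x_i\leq x$ and $x\leq mx_i$ for every $i$. These witnesses need not lie in $S_{\ll\infty}$, so they cannot be fed into the sum-form of CFP directly; the device is to shrink them. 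Fixing a $\ll$-increasing sequence $w_1\ll w_2\ll\cdots$ with $\sup_l w_l=x$ and each $w_l\neq0$, and writing $mx_i=\sup_j mx_i^{(j)}$ with $x_i^{(j)}\ll x_i$, the relation $w_i\ll x\leq mx_i$ yields $w_i\leq mx_i^{(j_i)}$ for some $j_i$; I set $x_i':=x_i^{(j_i)}$. Then $x_i'\ll x_i\leq x\leq\infty$ gives $x_i'\in S_{\ll\infty}$, while $\sum_i x_i'\leq\sum_i x_i\leq x$ and $w_i\leq mx_i'$.

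It then remains to verify the tail hypothesis of CFP and conclude. For each $K$ I would estimate $m\sum_{i\geq K}x_i'=\sum_{i\geq K}mx_i'\geq\sum_{i\geq K}w_i\geq\infty\cdot w_K$, and simplicity of $\Cu(A)$ forces $\infty\cdot w_K=\infty$ because $w_K\neq0$ is full; hence $m\sum_{i\geq K}x_i'=\infty$ for every $K$. Applying the second characterization of CFP in Definition \ref{PropCFP} to the sequence $(x_i')_i\subseteq S_{\ll\infty}$ then gives $\sum_i x_i'=\infty$, and since $\sum_i x_i'\leq x$ this forces $x=\infty$, which is property (QQ). The step I expect to be the crux is precisely the membership $x_i'\in S_{\ll\infty}$: one must shrink the decomposition witnesses enough to sit below $\infty$ in the way-below order, yet keep them large enough (namely $w_i\leq mx_i'$) to preserve the divergent tails. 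This balance is exactly what the interpolation property of $\ll$ and the rule $a\ll b\leq c\Rightarrow a\ll c$ provide.
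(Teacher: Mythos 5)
Your proof is correct, and its skeleton coincides with the paper's: in both, the crux of CFP $\Rightarrow$ (QQ) is Lemma \ref{Lem:xdecomposable}(2), which uses $\Cu$-stability to turn $mx=\infty$ into an $(\omega,m)$-decomposition of $x$ in the sense of Definition \ref{Def:wnDecomposable}. Where you genuinely diverge is in how the remaining steps are handled: the paper outsources them to citations, namely \cite{BP18} for (QQ) $\Rightarrow$ CFP and \cite[Proposition 9.3]{RobertRordam} for the statement that under CFP every $(\omega,n)$-decomposable element is properly infinite, whereas you prove both in place. Your one-line derivation of (QQ) $\Rightarrow$ CFP by reading off the case $k=1$ of the second characterization in Definition \ref{PropCFP} is exactly right and, as you observe, needs no $\Cu$-stability. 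Your shrinking argument is in effect a self-contained proof of the simple case of \cite[Proposition 9.3]{RobertRordam}: the decomposition witnesses $x_i$ need not lie in $S_{\ll\infty}$, and your replacements $x_i':=x_i^{(j_i)}$, obtained from $w_i\ll x\le mx_i=\sup_j mx_i^{(j)}$ so that $w_i\le mx_i'$ while $x_i'\ll x_i\le x\le\infty$, are legitimate (sup-continuity of addition justifies $mx_i=\sup_j mx_i^{(j)}$, and $a\ll b\le c\Rightarrow a\ll c$ gives $x_i'\in S_{\ll\infty}$); the tail estimate $m\sum_{i\ge K}x_i'\ge\sum_{i\ge K}w_i\ge\infty\cdot w_K=\infty$, using simplicity and $w_K\neq 0$, then places the sequence squarely within the hypotheses of Definition \ref{PropCFP}, and $\sum_i x_i'\le x$ closes the loop. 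What your route buys is independence from the general (non-simple) machinery of \cite{RobertRordam}, at the cost of length; the paper's version is shorter but leans on two external results. Two trifles: you should truncate the sequence $(w_l)$ to discard any initial zero terms (possible since $x\neq 0$), and note that the paper's proof labels the \cite{BP18} citation as the ``only if'' part even though the direction it covers is (QQ) $\Rightarrow$ CFP --- the mathematical split is the same as yours.
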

\begin{proof}
	The "only if" part is already proven in \cite{BP18}, so let us show the converse.  Let $x\in \Cu(A)$ such that $nx=\infty$ for some $n$. Then, one has that $x$ is $(\omega,n)$-decomposable by Lemma \ref{Lem:xdecomposable}. Using the characterization of Corona Factorization Property described in \cite[Proposition 9.3]{RobertRordam}, one has that any $(\omega,n)$-decomposable element is properly infinite; therefore, the desired result follows.
\end{proof}

	Focusing on the real rank zero setting, recall that if $A$ is simple and neither stably finite nor purely infinite case, then we have that any element in $\Cu(A)$ is $(\omega,n)$-decomposable for some $n$  by \cite[Proposition 4.2]{OPR1}. Using this fact, following the lines of the above proof, we obtain the well-known dichotomy between stably finite and purely infinite under the assumptions of simplicity, real rank zero and Corona Factorization property (Zhang's Theorem).

\section{Projections on $\mathcal M(A\otimes\mathcal K)$ and Asymptotic $\omega$-comparison}\label{sec:asymomega}

In this section we seek to study asymptotic regularity for separable simple and stably finite C*-algebras, as defined by Ng in \cite{NgCFP}. The final goal is to show the equivalence between this property and property $R$ (Theorem \ref{Thm:RAssym}).  
 This section has been build on from a collaboration effort started some years ago with M. Christensen. In particular, some parts of this section already appeared in M. Christensen's PhD-thesis.

As happened with both $\omega$-comparison and the Corona Factorization property in \cite{OPR2}, we start rewriting asymptotic regularity condition in terms of an order property associated to the Cuntz semigroup. The equivalence between these properties is exposed in Lemma \ref{lem:AsComp}. After rephrasing this condition, we immerse into the $\mathcal M(A\otimes\mathcal K)$ framework to show the desired equivalence between property $R$ and asymptotic regularity (Theorem \ref{Thm:RAssym}). 

Let's start recalling that a C*-algebra $D$ is said to have property {\rm (S)} if it has no unital quotients and admits no bounded 2-quasitraces. This property tries to characterize stable C*-algebras as explained in the introduction, and it is the milestone of the current section. An equivalent definition of property {\rm (S)} is given in \cite[Proposition 4.5]{OPR2}. This states that $D$ satisfies property {\rm (S)} if and only if for all $a\in F(D)_+$, there exists $b\in D_+$ such that $ab=0$ and $\la a\ra<_s\la b\ra$ in $\Cu(D)$, where $F(D):=\{a\in A_+\mid ae=e \text{ for some }e\in A_+ \}$.  We provide an equivalent condition in Proposition \ref{prop:PropS} using the projections of $\mathcal M(A\otimes\mathcal K)$.
With this in mind, we recall the next definition. 

\begin{definition}[Asymptotically regular]
	Given a separable C*-algebra $A$, we say that $A$ is asymptotically regular if, for any full hereditary C*-subalgebra $D$ of $A\otimes K$ with property {\rm (S)}, there exists an integer $n\in\mathbb N$ such that $M_n(D)$ is stable.
\end{definition}

In order to determine the above property, we will relate it to the following condition in the Cuntz semigroup of $A$. Due to its relationship with $\omega$-comparison, it is natural to call it asymptotic $\omega$-comparison.

\begin{definition}
	Let $S$ be a simple $\Cu$-semigroup. We say that $S$ satisfies asymptotic $\omega$-comparison if the following holds:
	\begin{itemize}
		\item  whenever $y_1,y_2,\ldots$ is a sequence of non-zero elements in $S_\ll$, such that $y_i<_sy_{i+1}$ for all $i\geq 1$, there exists $n\in\mathbb N$ such that $n\sum^\infty_{i=m}y_i=\infty$ for all $m\geq 1$.
	\end{itemize}
\end{definition}

In a similar fashion that it is shown an equivalent definition of $\omega$-comparison via the use of functionals of $\Cu(A)$ in \cite{BP18}, we are able to conclude the following proposition. We omit the proof, and recommend to look at \cite{BP18} for further details.
\begin{proposition}\label{Prop:asymw}
	Let $S$ be a simple $\Cu$-semigroup. Then the following are equivalent:
	\begin{enumerate}
		\item $S$ has asymptotic $\omega$-comparison.
		\item Whenever $y_1,y_2,\ldots$ is a sequence of non-zero elements in $S_\ll$ satisfying the condition that $\lambda(\sum^\infty_{i=1}y_i)=\infty$ for all non-zero functionals $\lambda$ on $S$, there exists $n\in\mathbb N$ such that $n\sum^\infty_{i=m}y_i=\infty$, for all $m\in\mathbb N$.
	\end{enumerate}
\end{proposition}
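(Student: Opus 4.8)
The plan is to follow the template used in \cite{BP18} for the equivalence of the two formulations of $\omega$-comparison in Definition \ref{defAlgw}, since the present statement is the exact asymptotic analogue. Both conditions have the same conclusion (existence of $n$ with $n\sum_{i\geq m}y_i=\infty$ for all $m$), so the whole content lies in comparing their hypotheses.

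First I would dispatch the direction $(2)\Rightarrow(1)$, which is immediate. Suppose $(y_i)$ is a sequence of nonzero elements of $S_\ll$ with $y_i<_sy_{i+1}$ for all $i$, and fix a nonzero functional $\lambda$. If $\lambda(y_{i_0})=\infty$ for some $i_0$ we are done; otherwise, applying $\lambda$ to $(k_i+1)y_i\leq k_iy_{i+1}$ gives $\lambda(y_i)\leq\lambda(y_{i+1})$, so $(\lambda(y_i))_i$ is non-decreasing, and by faithfulness (valid since $S$ is simple) $\lambda(y_1)>0$. Hence $\lambda(\sum_i y_i)=\sum_i\lambda(y_i)=\infty$. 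As this holds for every nonzero $\lambda$, condition $(2)$ applies and yields exactly the conclusion of asymptotic $\omega$-comparison.

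The substance is the converse $(1)\Rightarrow(2)$, where a functional hypothesis must be converted into the order-theoretic hypothesis $<_s$. The first step is to upgrade the pointwise statement $\lambda(\sum_i y_i)=\infty$ (for all nonzero $\lambda$) to a uniform one over the compact set of normalized functionals $\{\lambda\in F(S):\lambda(e)=1\}$, where $e:=y_1$: by lower semicontinuity of $\lambda\mapsto\lambda(s_N)$ (with $s_N:=\sum_{i=1}^N y_i$) the sets $\{\lambda:\lambda(s_N)>M\}$ are open and cover this set as $N\to\infty$, so a single $N$ works for each $M$ (this compactness is available whenever $S$ is countably based, in particular for $S=\Cu(A)$ with $A$ separable). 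With uniform divergence in hand, I would build an auxiliary $<_s$-increasing sequence $(w_k)$ out of consecutive blocks of the $(y_i)$ whose tails dominate a fixed multiple of each tail $\sum_{i\geq m}y_i$; the $<_s$ relations are produced without any comparison hypothesis using the elementary observation that in a simple $S$, if $a\in S_\ll$ and $b\neq 0$ then $a\propto b$, say $a\leq jb$, whence $(j+1)a\leq ja+jb=j(a+b)$ and so $a<_s a+b$. Applying asymptotic $\omega$-comparison to $(w_k)$ then produces a single $n$ with $n\sum_{k\geq\ell}w_k=\infty$ for all $\ell$, which transfers back to $n'\sum_{i\geq m}y_i=\infty$ for all $m$.

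The main obstacle is precisely this translation between functional data and the relation $<_s$: the increment observation above supplies $<_s$ comparisons only for nested sums, whereas controlling the total (so that the tails of $(w_k)$ genuinely bound the tails of $(y_i)$ by a fixed multiple) forces the blocks to be essentially disjoint, and reconciling these two competing requirements is where the uniform divergence and the geometry of the compact functional space are used in an essential way. This is the delicate part carried out in \cite{BP18}, which is why I would import it rather than reprove it here; the minor point that a functional may take the value $\infty$ on an element of $S_\ll$ is absorbed by normalizing and passing to tails.
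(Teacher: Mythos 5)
Your proposal is correct and takes essentially the same route as the paper, which in fact omits the proof altogether and simply refers the reader to \cite{BP18}, where the analogous functional reformulation of $\omega$-comparison is established by precisely the block-decomposition and functional-compactness argument you outline for $(1)\Rightarrow(2)$. Your fully written easy direction $(2)\Rightarrow(1)$ (monotonicity of $\lambda(y_i)$ along $<_s$, faithfulness of functionals on a simple $\Cu$-semigroup, and divergence of the resulting series) is sound, and deferring the delicate converse to \cite{BP18} is exactly what the paper itself does.
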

Let us start with an easy lemma, which rewrites the first part of Proposition \ref{PropCFP} {\rm (iv)}.

\begin{lemma}\label{lem:RewriteCondCFP}
	Let $S$ be a Cu-semigroup and $y_1,y_2,\ldots$ be any sequence of elements in $S$. Then the next holds:
	\begin{enumerate}
		\item If there is an $n\in\mathbb N$ such that $n\sum^\infty_{i=m}y_i=\infty$ for all $m\in \mathbb N$, then, for every $j\geq 1$, it holds that $\sum^j_{k=1}y_k\leq n\sum^\infty_{i=j+1}y_i.$
		\item If there is an $n\in\mathbb N$ such that $n\sum^\infty_{i=1}y_i=\infty$ and $n\sum^{j}_{k=1}y_k\leq n\sum^\infty_{l=j+1}y_l$ for all $j\geq 1$, then $2n\sum^\infty_{l=m}y_l=\infty$ for all $m\geq 1$.
	\end{enumerate}
\end{lemma}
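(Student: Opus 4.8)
The plan is to observe that both parts are essentially formal consequences of the fact that $\infty$ is the largest element of $S$, combined with the continuity of addition over suprema of increasing sequences, which is precisely what lets us split the infinite sums $\sum_{i=m}^\infty y_i$ freely and distribute integer multiples across them.

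For part (1), the key remark is that the hypothesis already furnishes the right-hand side outright. Applying the assumption $n\sum_{i=m}^\infty y_i=\infty$ with the particular index $m=j+1$ gives $n\sum_{i=j+1}^\infty y_i=\infty$. Since $\infty$ is by definition the largest element of $S$, the inequality $\sum_{k=1}^j y_k\leq \infty=n\sum_{i=j+1}^\infty y_i$ is automatic, so there is nothing further to check.

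For part (2), I would fix $m\geq 2$ and split the total sum as $\sum_{i=1}^\infty y_i=\sum_{k=1}^{m-1}y_k+\sum_{l=m}^\infty y_l$; multiplying by $n$ and distributing yields $\infty=n\sum_{i=1}^\infty y_i=n\sum_{k=1}^{m-1}y_k+n\sum_{l=m}^\infty y_l$. Now I invoke the second hypothesis with $j=m-1$, namely $n\sum_{k=1}^{m-1}y_k\leq n\sum_{l=m}^\infty y_l$, to dominate the first summand by the second, giving $\infty\leq n\sum_{l=m}^\infty y_l+n\sum_{l=m}^\infty y_l=2n\sum_{l=m}^\infty y_l$. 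As $\infty$ is the top element, this forces $2n\sum_{l=m}^\infty y_l=\infty$. The case $m=1$ is handled separately and more directly: there the finite head sum is empty, so $2n\sum_{l=1}^\infty y_l\geq n\sum_{l=1}^\infty y_l=\infty$, whence equality.

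The only points requiring a moment of care — and the closest thing to an obstacle in what is otherwise a purely formal argument — are the justification that an infinite sum decomposes as $\sum_{i=1}^\infty y_i=\sum_{k=1}^{m-1}y_k+\sum_{l=m}^\infty y_l$ (which rests on addition in a $\Cu$-semigroup commuting with suprema of increasing sequences) and the bookkeeping of the boundary index $m=1$, where the finite head sum vanishes.
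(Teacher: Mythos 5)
Your proposal is correct and takes essentially the same route as the paper: the paper dismisses part (1) as obvious (your observation that the hypothesis at $m=j+1$ makes the right-hand side equal to the top element $\infty$ is exactly that), and for part (2) the paper's one-line computation $2n\sum_{l=m}^{\infty}y_l=n\sum_{l=m}^{\infty}y_l+n\sum_{l=m}^{\infty}y_l\geq n\sum_{i=1}^{\infty}y_i=\infty$ is precisely your splitting of $n\sum_{i=1}^{\infty}y_i$ at index $m$ combined with the hypothesis at $j=m-1$. Your extra care about the decomposition of infinite sums via continuity of addition and about the boundary case $m=1$ merely spells out what the paper leaves implicit.
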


\begin{proof}
	The first statement is obvious, so let's show the second. To this end, let $y_1,y_2,\ldots$ a sequence in $S$ and $n\in\mathbb N$ as described in the statement.
	Then, for an arbitrary $m\in\mathbb N$ it follows 
	$$2n\sum^\infty_{l=m}y_l=n\sum^\infty_{l=m}y_l+n\sum^\infty_{l=m}y_l\geq n\sum^\infty_{i=1}y_i=\infty,$$ getting the desired inequality.
\end{proof}

In order to conclude the desired equivalence we need the following lemmas. The first is a mild elaboration of \cite[Lemma 4.3]{OPR2} and needs the next definition.

Given a C*-algebra $D$, and a strictly positive contraction $c\in D_+$, let $$L_c(D):=\{a\in D_+\mid g_\varepsilon(c)a=a\text{ for some }\varepsilon>0\},$$ where $g_\varepsilon$ is the continuous function defined before  Lemma \ref{lem:Stable}.

Note that, if $a,b\in L_c(D)$, then $a+b\in L_c(D)$ and $g_\delta(c)dg_\delta(c)\in L_c(D)$, for every $\delta>0$ and $d\in D_+$. 

\begin{lemma}\label{lem:LcD}
	Let $D$ be a $\sigma$-unital C*-algebra with property {\rm (S)}, and let $c\in D$ be a strictly positive contraction. Then, for every $a\in L_c(D)$, there exists $b\in D_+$ such that $ab=0$, $\la a\ra<_s\la b\ra,$ and $b\in L_c(D)$.
\end{lemma}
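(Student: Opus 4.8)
The plan is to feed the $F(D)$-reformulation of property {\rm (S)} a suitable ``unit-like'' element sitting just above $a$, and then to repair the orthogonal complement it produces so that it returns to $L_c(D)$. Fix $\varepsilon>0$ with $g_\varepsilon(c)a=a$ (and assume $a\neq0$, the case $a=0$ being trivial with $b=0$). From $g_\varepsilon(c)a=a$ one gets $a=g_\varepsilon(c)ag_\varepsilon(c)\le\|a\|g_\varepsilon(c)$, and since $g_{\varepsilon/2}(c)g_\varepsilon(c)=g_\varepsilon(c)$ also $a\le\|a\|g_{\varepsilon/2}(c)$. I will apply property {\rm (S)} to the element $a':=g_{\varepsilon/2}(c)$.

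First I would check that $a'\in F(D)$: the identity $g_{\varepsilon/2}(c)g_\varepsilon(c)=g_\varepsilon(c)$ says $a'e=e$ for the element $e=g_\varepsilon(c)$, which is nonzero since $g_\varepsilon(c)a=a\neq0$. Next I record the key slack: because $g_\varepsilon=g_\varepsilon\,g_{\varepsilon/2}$ (as $g_{\varepsilon/2}\equiv1$ on the support of $g_\varepsilon$), one has $g_\varepsilon(c)\precsim(g_{\varepsilon/2}(c)-\delta)_+$ for small $\delta>0$, hence $\la a\ra\le\la g_\varepsilon(c)\ra\ll\la g_{\varepsilon/2}(c)\ra=\la a'\ra$. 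Applying the $F(D)$-characterization of property {\rm (S)} recalled before the lemma to $a'$ yields $b_0\in D_+$ with $g_{\varepsilon/2}(c)b_0=0$ and $\la a'\ra<_s\la b_0\ra$, say $(k+1)\la a'\ra\le k\la b_0\ra$ for some $k$.

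The output $b_0$ is orthogonal to $g_{\varepsilon/2}(c)$, so it is concentrated near the bottom of $\sigma(c)$ and need not lie in $L_c(D)$; repairing this is the crux. I would set $b:=g_\eta(c)b_0g_\eta(c)$ with $\eta\le\varepsilon/4$ small. Such a $b$ lies in $L_c(D)$ by the remark preceding the statement. Orthogonality survives because $\eta\le\varepsilon/4$ forces $g_{\varepsilon/2}(c)g_\eta(c)=g_{\varepsilon/2}(c)$, whence $g_{\varepsilon/2}(c)b=g_{\varepsilon/2}(c)b_0g_\eta(c)=0$; as $a\le\|a\|g_{\varepsilon/2}(c)$, the element $b$ is orthogonal to $g_{\varepsilon/2}(c)$ and therefore to $a$, i.e. $ab=0$.

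The main obstacle is keeping the strict comparison $\la a\ra<_s\la b\ra$ alive, since passing from $b_0$ to the cut-down $b$ can only decrease the Cuntz class. This is exactly where the way-below slack $\la a\ra\ll\la a'\ra$ pays off: from $(k+1)\la a\ra\ll(k+1)\la a'\ra\le k\la b_0\ra=\sup_{\delta>0}k\la(b_0-\delta)_+\ra$ I can extract a $\delta>0$ with $(k+1)\la a\ra\le k\la(b_0-\delta)_+\ra$. Then, shrinking $\eta$ further so that $\|g_\eta(c)b_0g_\eta(c)-b_0\|<\delta$ (possible since $\{g_\eta(c)\}$ is an approximate unit for $D$), the standard estimate $(b_0-\delta)_+\precsim g_\eta(c)b_0g_\eta(c)=b$ gives $(k+1)\la a\ra\le k\la b\ra$, that is $\la a\ra<_s\la b\ra$. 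Both requirements on $\eta$ (being $\le\varepsilon/4$ and making the norm defect $<\delta$) hold for all sufficiently small $\eta$, so the construction closes and produces $b\in L_c(D)$ with $ab=0$ and $\la a\ra<_s\la b\ra$, as desired.
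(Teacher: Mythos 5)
Your proof is correct and takes essentially the same route as the paper's: both feed a unit-like cutoff of $c$ dominating $a$ into the $F(D)$-characterization of property (S) from \cite[Proposition 4.5]{OPR2} (the paper uses $e=g_{\varepsilon'}(c)$ with the slack $\la a\ra\leq\la (e-1/2)_+\ra\ll\la e\ra$, you use $a'=g_{\varepsilon/2}(c)$ with the slack $\la g_\varepsilon(c)\ra\ll\la g_{\varepsilon/2}(c)\ra$), and both then repair the resulting $b_0$ by compressing with the approximate unit $g_\eta(c)$, preserving $<_s$ via $(b_0-\delta)_+\precsim g_\eta(c)b_0g_\eta(c)$ once $\|b_0-g_\eta(c)b_0g_\eta(c)\|<\delta$. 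Your explicit check that $g_{\varepsilon/2}(c)\in F(D)$ is a small improvement in rigor, as the paper invokes this implicitly.
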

\begin{proof}
	Choose $\varepsilon'>0$ such that $g_{\varepsilon'}(c)a=a$ and denote by $e:=g_{\varepsilon'}(c)$. Note that $a\precsim (e-1/2)_+$. Since $e\in L_c(D)$, and $D$ has property {\rm (S)}, there exists $b_0\in D_+$ such that $eb_0=0$ and $\la e\ra<_s\la b_0\ra$ by \cite[Proposition 4.5]{OPR2}. Moreover, there exists $\delta>0$ such that $\la(e-1/2)_+\ra<_s\la(b_0-\delta)_+\ra$.  
	
	Since $\{g_{1/m}(c)\}$ is an approximate unit for $D$, we may choose $m\in \mathbb N$ such that $\varepsilon:=1/m<\varepsilon'/2$ and $\|b_0-g_{1/m}(c)b_0g_{1/m}(c)\|<\delta$. Moreover, the element $g_{1/m}(c)b_0g_{1/m}(c)$ belongs to $L_c(D)$ and it is orthogonal to $a$. Hence, by \cite[Lemma 2.5]{KR00} one has $$\la a\ra\leq \la (e-1/2)_+\ra<_s\la (b_0-\delta)_+\ra\leq \la g_{1/m}(c)b_0g_{1/m}(c)\ra,$$ as desired.
\end{proof}

We are now in the position of showing the equivalence between both notions.
\begin{lemma}\label{lem:AsComp}
	Let $A$ be a simple and separable C*-algebra. Then the following are equivalent:
	\begin{enumerate}
		\item $\Cu(A)$ has asymptotic $\omega$-comparison.
		\item $A$ is asymptotically regular
	\end{enumerate}
\end{lemma}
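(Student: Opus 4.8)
The plan is to prove the two implications separately, modeling the argument on the proof that regularity is equivalent to $\omega$-comparison in \cite{OPR2}, and using the Hjelmborg--R\o rdam stability criterion of \cite{HR98} as the bridge between the analytic side (stability of $M_n(D)$) and the algebraic tail-sum conditions on $\Cu(A)$. Throughout I would exploit that a full hereditary subalgebra $D$ of the simple algebra $A\otimes\mathcal K$ is itself simple and Morita equivalent to $A\otimes\mathcal K$, so $\Cu(D)\cong\Cu(A)$ and the partial sums built inside $D$ can be read off in $\Cu(A)$; I would also use the characterization of property {\rm (S)} from \cite[Proposition 4.5]{OPR2} together with Lemma \ref{lem:LcD} and the rewriting in Lemma \ref{lem:RewriteCondCFP}.

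For {\rm (1)}$\Rightarrow${\rm (2)}: let $D$ be full hereditary with property {\rm (S)} and fix a strictly positive contraction $c\in D$. I would iterate Lemma \ref{lem:LcD} to manufacture pairwise orthogonal elements $a_1,a_2,\dots\in L_c(D)$ with $\la a_1+\cdots+a_i\ra<_s\la a_{i+1}\ra$ for all $i$, simultaneously arranging (by an exhaustion/interleaving bookkeeping against the approximate unit $\{g_{1/m}(c)\}$) that the partial sums $a_1+\cdots+a_j$ are cofinal in $L_c(D)$. Writing $y_i:=\la a_i\ra$, the chain condition forces $y_i<_s y_{i+1}$, and these nonzero classes lie in $S_\ll$ (passing to way-below cut-downs if necessary). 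Asymptotic $\omega$-comparison then yields an $n$ with $n\sum_{i\ge m}y_i=\infty$ for all $m$, and Lemma \ref{lem:RewriteCondCFP}(1) converts this into $\sum_{k=1}^{j}y_k\le n\sum_{i>j}y_i$ for every $j$. This last inequality says precisely that each ``beginning'' is Cuntz-dominated by $n$ orthogonal copies of a tail; feeding it into the (approximate-orthogonality) form of \cite[Theorem 2.2]{HR98} applied to $M_n(D)$, and using cofinality to reduce an arbitrary positive element of $M_n(D)$ to a beginning, shows that $M_n(D)$ is stable.

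For {\rm (2)}$\Rightarrow${\rm (1)}: I would argue the contrapositive. Given a nonzero $<_s$-increasing sequence $(y_i)$ in $S_\ll$ witnessing the failure of asymptotic $\omega$-comparison (so for every $n$ there is an $m$ with $n\sum_{i\ge m}y_i\neq\infty$), choose positive representatives $a_i\in(A\otimes\mathcal K)_+$, form an orthogonal sum $b=\bigoplus_i a_i$, and set $D=\overline{b(A\otimes\mathcal K)b}$. Simplicity of $A\otimes\mathcal K$ makes $D$ full, and I would verify property {\rm (S)} through \cite[Proposition 4.5]{OPR2}: every element of $F(D)$ is essentially a beginning $\sum_{k\le\ell}a_k$, and the relation $y_\ell<_s y_{\ell+1}$ supplies an orthogonal tail element strictly $<_s$-dominating it. It then remains to see that no $M_n(D)$ is stable. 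Stability of $M_n(D)$, via \cite[Theorem 2.2]{HR98} applied to the beginnings of the strictly positive element $1_n\otimes b$, would give both $n\sum_i y_i=\infty$ and $n\sum_{k=1}^{j}y_k\le n\sum_{i>j}y_i$ for all $j$; Lemma \ref{lem:RewriteCondCFP}(2) then forces $2n\sum_{i\ge m}y_i=\infty$ for all $m$, contradicting the defining property of $(y_i)$. Hence no $M_n(D)$ is stable and $A$ fails to be asymptotically regular.

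The main obstacle I expect lies in the faithful translation between the analytic stability criterion for $M_n(D)$ and the algebraic tail-sum statements: one must (i) carry out the exhaustion argument guaranteeing that the constructed beginnings are cofinal, so that checking the criterion on beginnings suffices; (ii) keep the orthogonality and the $L_c(D)$-reductions honest, which is exactly why the approximate-orthogonality form of \cite[Theorem 2.2]{HR98} is convenient; and (iii) track the bookkeeping between the constant $n$ delivered by asymptotic $\omega$-comparison and the $2n$ produced by Lemma \ref{lem:RewriteCondCFP}(2). A secondary point needing care is confirming that the classes $y_i$ from the construction genuinely lie in $S_\ll$; in the non-stably-finite simple cases (purely infinite, or infinite but not purely infinite) both properties degenerate and should be disposed of separately.
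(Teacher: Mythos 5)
Your overall architecture coincides with the paper's: both directions run through Lemma \ref{lem:LcD} and the Hjelmborg--R\o rdam stability criteria, and your {\rm (2)}$\Rightarrow${\rm (1)} half is essentially the paper's proof verbatim --- pairwise orthogonal representatives $b_i$ with $\|b_i\|\leq 2^{-i}$, $D=\overline{b(A\otimes\mathcal K)b}$, property {\rm (S)} checked via functionals and non-unitality, then stability of $M_n(D)$ yielding $n\sum_i y_i=\infty$ together with $n\sum_{k\leq j}y_k\leq n\sum_{i>j}y_i$ (via the approximate-unit computation with $e_m^{j+1}$), and Lemma \ref{lem:RewriteCondCFP}(2) closing the loop with the constant $2n$, exactly the bookkeeping you anticipate.

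In {\rm (1)}$\Rightarrow${\rm (2)}, however, one step of your plan would fail as stated. You propose to verify stability of $M_n(D)$ on an \emph{arbitrary} positive element via the criterion of \cite[Theorem 2.2]{HR98}, reducing it to a ``beginning'' by cofinality and dominating by $n$ copies of a tail. But the relaxation in that criterion is only in the approximation $\|y^*y-x\|<\varepsilon$; the orthogonality $yy^*\perp y^*y$ must be \emph{exact}, and a tail $\sum_{i>j}a_i$ of your chain is not orthogonal to an arbitrary cut-down $x_0$: the $a_i$ lie in $L_c(D)$, hence inside $\overline{g_\delta(c)Dg_\delta(c)}$, which meets the range of every approximate-unit cut-down (Cuntz domination $\la x_0\ra\leq\la a_1+\cdots+a_j\ra$ does not place $x_0$ in the hereditary algebra of the beginning). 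The paper sidesteps arbitrary elements entirely: it uses the strictly-positive-element criterion \cite[Theorem 2.1]{HR98}, so only the cut-downs $(c\otimes 1_m-\varepsilon)_+$ must be orthogonally dominated, and it secures the needed orthogonality at the inductive step by feeding $(c-\varepsilon_{i+1})_++b_1+\cdots+b_i$ into Lemma \ref{lem:LcD}, obtaining $b_{i+1}\perp(c-\varepsilon_{i+1})_+$ and $\la(c-\varepsilon_{i+1})_+\ra<_s\la b_{i+1}\ra$ simultaneously --- this is the precise form your ``exhaustion/interleaving bookkeeping'' must take, and no cofinality of beginnings is then needed. Finally, the finite orthogonal dominating sum is not extracted from Lemma \ref{lem:RewriteCondCFP}(1) but from compactness: $\la(c\otimes 1_n-\varepsilon_m)_+\ra\ll\infty=n\sum_{i\geq m}y_i$, so some finite partial sum $n\sum_{i=m}^N y_i$ already dominates, and orthogonality of $(c\otimes 1_n-\varepsilon)_+$ to $\sum_{i=m}^N b_i\otimes 1_n$ holds by construction since $\varepsilon_i\leq\varepsilon_m<\varepsilon$. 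With that local repair your proof becomes the paper's; the remaining points you flag (that $\la b_i\ra\in S_\ll$, since $b_i\in L_c(D)$ forces $\la b_i\ra\leq\la(c-\delta)_+\ra\ll\infty$) are handled as you expect.
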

\begin{proof}
	Assume $\Cu(A)$ has asymptotic $\omega$-comparison, and let $D\subseteq A\otimes\mathcal K$ be a non-zero hereditary sub-C*-algebra, with property {\rm (S)}. Let $c$ be a strictly positive element in $D$. Then, for every $m\geq 1$, the element $c\otimes 1_m\in D\otimes M_m\cong M_m(D)$ is strictly positive. Hence, proving the stability of $M_m(D)$, it suffices to prove the existence of $n\in\mathbb N$ such that, for all $\varepsilon>0$, there exists $b\in (D\otimes M_m)_+$ satisfying that $(c\otimes 1_m-\varepsilon)_+\perp b$, and $(c\otimes 1_m-\varepsilon)_+\precsim b$ by \cite[Theorem 2.1]{HR98}.
	
	Let $(\varepsilon_n)_{n\geq 1}$ be a decreasing sequence of positive real numbers such that $\varepsilon\to 0$. We prove, by induction, that there exists a sequence $(b_n)$ of pairwise orthogonal, positive elements in $D$, such that $\la b_i\ra<_s\la b_{i+1}\ra$, $b_i\in L_c(D)$, $\la (c-\varepsilon_i)_+\ra<_s\la b_i\ra,$ and $(c-\varepsilon_i)_+\perp b_i$ for all $i\geq 1$. The induction starts from Lemma \ref{lem:LcD}. Now, let $b_1,\ldots,b_n$ satisfying the desired properties. By the properties of $L_c(D)$, we have that $$(c-\varepsilon_{n+1})_++b_1+\ldots+b_n\in L_c(D),$$ whence, applying Lemma \ref{lem:LcD}, it follows that there exists $b_{n+1}$ satisfying the desired properties.
	
	For $i\geq 1$, let $y_i:=\la b_i\ra\in \Cu(A)$. Now, by asymptotic $\omega$-comparison it follows that there exists $n\in\mathbb N$ such that $n\sum^\infty_{i=m}y_i=\infty$ for all $m\in \mathbb{N}$. Let $\varepsilon>0$ be arbitrary and choose $m\in\mathbb{N}$ such that $\varepsilon_m<\varepsilon$. Then, there exists $N\in\mathbb N$ such that $$\la(c\otimes 1_n-\varepsilon_m)_+\ra\leq n\sum^N_{i=m}\la b_i\ra=\la \sum^N_{i=m}b_i\otimes 1_n\ra.$$ Since $(c\otimes 1_n-\varepsilon)\leq (c\otimes 1_n-\varepsilon_m)\perp \sum^N_{i=m}b_i\otimes 1_n$, the desired result follows.
	
	For the converse, let $y_1,y_2,\ldots$ a sequence of non-zero elements in $\Cu(A)$ such that $y_i<_sy_{i+1}$ for all $i\geq 1$. Choose pairwise orthogonal positive elements $b_i\in A\otimes K$ such that $\|b_i\|\leq 2^{-i}$ and $y_i=\la b_i\ra$. Set $b:=\sum^\infty_{i=1}b_i$ and let $D\subseteq A\otimes \mathcal K$ denote the hereditary sub-C*-algebra generated by $b$. We show that $D$ has property {\rm (S)}. It is easy to see that $\lambda(\la b\ra)=\infty$ for all functionals $\lambda$ on $\Cu(A)$; hence, $D$ does not admit any bounded 2-quasitrace. Similarly, assuming for a contradiction that $D$ is unital, it follows that $b$ is invertible, and therefore $\sum^m_{i=1}b_i$ is invertible for some $m\in\mathbb N$. Therefore, $b_k=0$ for all $k>m$ since these elements are orthogonal to an invertible element. This implies that $b_i=0$ for all $i$ due $\la b_i\ra<_s\la b_j\ra$ whenever $i<j$, a contradiction.
	
	Therefore, $D$ has property {\rm (S)} and $D\otimes M_n$ is stable for some $n\in\mathbb N$ by asymptotic regularity of $A$. Now,  by Lemma \ref{lem:RewriteCondCFP} one needs to show  $$\sum^j_{k=1}\la (b_k\otimes 1_n-\varepsilon)_+\ra\leq \sum^\infty_{i=j+1}\la b_i\otimes 1_n\ra \text{ for every }j\geq 1 \text{ and }\varepsilon>0$$ to conclude the Corona Factorization Property.
	
	To this end, fixing $\varepsilon>0$, and denoting $d_i:=b_i\otimes1_n$, for all $i\geq 1$ build
	
	\begin{equation}
	e_m^j:=\sum^m_i=j g_{1/m}(d_i)\text{ and } e_m:=e_m^1 \text{ for every }j\geq 1\,\,\, m\geq 1.
		\end{equation}
	By functional calculus, it follows that $(e_m)_{m\geq 1}$ is an approximate unit for $D\otimes M_n$. Moreover, $e_m^j\precsim \sum^m_{i=j}d_i$ for all $j,m\in\mathbb N$. 
	
	Let us now use that $D\otimes M_n$ is stable. Then, there exists $a\in D\otimes M_n$ such that $\sum^j_{k=1}d_k\perp a$ and $\sum^j_{k=1}d_k\precsim a$. We may therefore choose $\delta>0$ such that $$\sum^j_{k=1}(d_k-\varepsilon)_+\precsim (a- \delta)_+.$$ 
	By construction, we can choose $m\in\mathbb N$ such that $\|a-e_mae_m\|<\delta$; therefore, the above orthogonality implies $$e_ma^{1/2}=(\sum^m_{i=1}g_{1/m}(d_i))a^{1/2}=(\sum^m_{i=j+1}g_{1/m}(d_i))a^{1/2}=e_m^{j+1}a^{1/2}.$$
	
	In particular, $e_mae_m\leq \|a\|(e_m^{j+1})^2$;hence,
	$$\sum^j_{k=1}(d_k-\varepsilon)_+\precsim (a-\delta)+\precsim e_mae_m\precsim (e^{j+1}_m)^2\precsim \sum^m_{i=j+1}d_i,$$
	as desired.
\end{proof}

The above equivalence shows that asymptotically regular implies dichotomy in our setting. 

\begin{proposition}\label{Prop:AsComp}
	Let $A$ be a simple and separable C*-algebra. Then it is either stably finite or purely infinite if it is asymptotically regular.
\end{proposition}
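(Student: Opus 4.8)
The plan is to prove the dichotomy by contradiction, assuming that $A$ is simple, separable, and asymptotically regular, but is neither stably finite nor purely infinite, and then deriving a contradiction from the asymptotic $\omega$-comparison established in Lemma~\ref{lem:AsComp}. The key structural fact I would exploit is that in the simple, neither-stably-finite-nor-purely-infinite case, every nonzero element of $\Cu(A)$ admits a multiple equal to $\infty$ (as recalled in the proof of Theorem~\ref{Thm:stableCSE}, citing \cite{BP18}); this gives strong control on the functional structure and on how the $\infty$ element behaves.

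First I would translate the hypotheses into the language of $\Cu(A)$. By Lemma~\ref{lem:AsComp}, asymptotic regularity of $A$ is equivalent to $\Cu(A)$ having asymptotic $\omega$-comparison. So it suffices to show that a simple $\Cu$-semigroup $S=\Cu(A)$ which is neither stably finite nor purely infinite cannot satisfy asymptotic $\omega$-comparison. In the not-stably-finite case, $\infty\ll\infty$ is compact by \cite[Proposition 5.2.10]{APT14}; and since $S$ is not purely infinite, $S\neq\{0,\infty\}$, so there is a nonzero element strictly below $\infty$ that is not equal to $\infty$. The plan is to build from such an element a strictly $<_s$-increasing sequence $(y_n)$ of nonzero elements in $S_\ll$ and then contradict the conclusion of asymptotic $\omega$-comparison, namely the existence of $n$ with $n\sum_{i=m}^\infty y_i=\infty$ for all $m$.

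The main technical step is the construction of the sequence $(y_n)$ witnessing a failure. I would start from a nonzero $y_1\ll\infty$ with $y_1\neq\infty$, and use simplicity together with the divisibility afforded by the non-stably-finite-non-purely-infinite structure to produce $y_{n+1}$ with $y_n<_s y_{n+1}$ while keeping each $y_n$ way below $\infty$. The crucial point to rule out asymptotic $\omega$-comparison is that, because $\infty$ is \emph{compact} here, $n\sum_{i=m}^\infty y_i=\infty$ together with $\infty\ll\infty$ forces $\infty\leq n\sum_{i=m}^{N}y_i$ for some finite $N$; so $\sum_{i=m}^N y_i$ already has a multiple equal to $\infty$, i.e.\ is a full finite approximant summing up to the compact $\infty$. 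I would then argue that this finite-stage collapse is incompatible with having arranged the $y_i$ so that a bounded multiple cannot absorb the remaining tail — intuitively, purely infinite behaviour is being forced on a semigroup that we assumed is not purely infinite, while stable finiteness is excluded by compactness of $\infty$, squeezing out the middle case.

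The hardest part will be making this squeeze rigorous: I must show that the simultaneous failure of stable finiteness (giving compactness of $\infty$, hence finite-stage absorption) and of pure infiniteness (giving a genuine nonzero element $y_1\neq\infty$ below $\infty$) is exactly what asymptotic $\omega$-comparison forbids. Concretely, compactness of $\infty$ lets me replace the infinite sum in the definition by a finite one, and I expect the contradiction to come from comparing $n\sum_{i=1}^{N}y_i=\infty$ with the requirement that each $y_i$ be a proper (non-$\infty$) finite element, since a finite sum of elements each admitting a multiple equal to $\infty$ but none equal to $\infty$ itself cannot, in a simple semigroup of this mixed type, be reconciled with the strict $<_s$ growth I built in. I would close the argument by observing that this contradicts the asymptotic $\omega$-comparison supplied by Lemma~\ref{lem:AsComp}, so $A$ must be stably finite or purely infinite.
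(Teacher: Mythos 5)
There is a genuine gap, and it sits exactly where you flagged the hardest part. To contradict asymptotic $\omega$-comparison you must produce a $<_s$-increasing sequence of nonzero elements of $S_\ll$ whose tails remain bounded away from $\infty$ after multiplication by \emph{every} fixed $n$; your proposal never constructs such decay. Note that the $<_s$-growth itself is free in this setting: since $A$ is simple and not stably finite, $\infty$ is compact, and applying fullness to $\infty\ll\infty$ gives $mz=\infty$ for every nonzero $z\in\Cu(A)$, so $(k+1)y_i\leq \infty = k y_{i+1}$ holds automatically. The real work, which "use simplicity together with the divisibility afforded by the structure" does not supply, is a quantitative smallness of the tails. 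The paper achieves it with Glimm's halving property: starting from a nonzero $x_0\ll\infty$ with $x_0\neq\infty$ (hence finite, as in a simple $\Cu$-semigroup every infinite element equals $\infty$), it builds $x_1,x_2,\ldots$ with $2^{i+1}x_i\leq x_0$, whence by induction $\sum_{i\geq j}x_i\leq 2x_j$ and therefore $n\sum_{i\geq j}x_i\leq 2^{j}\sum_{i\geq j}x_i\leq 2^{j+1}x_j\leq x_0\neq\infty$ once $2^{j}\geq n$. This refutes the existence of a uniform $n$ with $n\sum_{i=m}^\infty x_i=\infty$ for all $m$, and Lemma \ref{lem:AsComp} finishes the proof.

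Moreover, the specific contradiction you sketch would fail. You argue that compactness of $\infty$ forces a finite-stage absorption $\infty\leq n\sum_{i=m}^{N}y_i$, and that a finite sum of non-$\infty$ elements with the $<_s$ growth you arranged "cannot" have a multiple equal to $\infty$. But in the neither-stably-finite-nor-purely-infinite case \emph{every} nonzero element of $\Cu(A)$ has a multiple equal to $\infty$ --- this is precisely the fact the paper exploits to get $x_i<_s x_{i+1}$ --- so finite-stage absorption is not in tension with the elements being non-$\infty$, and no contradiction arises from it alone. Compactness of $\infty$ enters only to establish $mz=\infty$ for all nonzero $z$; the contradiction must come, as above, from tails dominated by a fixed finite element, which is the missing geometric-decay construction in your plan.
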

\begin{proof}
	Assuming $A$ is neither stably finite nor purely infinite, then  every $z\in \Cu(A)$ satisfies $nz=\infty$ and there exists $x\in\Cu(A)$ such that $x\ll\infty$ and $x\neq\infty$. Using Glimm's halving property (i.e. for each $y\in\Cu(A)$ there exists $z\in\Cu(A)$ such that $2z\leq y$) we may find a sequence $x_0:=x,x_1,x_2,\ldots\in\Cu(A)$ such that $2^{i+1}x_i\leq x_0$ for all $i\geq 1$. Moreover, by induction it follows that $\sum^\infty_{i=j}x_i\leq 2x_j$ for all $j\geq 1$.
	
	In particular, the latter implies that $2^j\sum^\infty_{i=j}x_i\leq 2^{j+1}x_j\leq x_0$ a finite element. Since every element in $\Cu(A)$ is eventually infinite, it follows that $x_i<_sx_{i+1}$ for all $i$, but there exists no $n\in\mathbb N$ such that $n\sum^\infty_{i=m}x_i=\infty$ for all $m\in\mathbb N$. Namely, $A$ cannot be asymptotically regular.
\end{proof}

We finish this section showing the equivalence between asymptotic regularity and property $R$, as defined in \cite{NgCFP}. Let us first introduce some notation and background in order to understand better the result.

As explained in section \ref{Sec:Preliminaries}, the Cuntz semigroup of a separable C*-algebra $A$ can be described via different frameworks. A concrete characterization between hereditary subalgebras of $A\otimes\mathcal K$ and projections in $\mathcal M(A\otimes\mathcal K)$ was explicitly written by Kucerovsky in \cite[Lemma 10]{Ku04}:

\begin{lemma}{\rm(\cite{Ku04})}\label{lemKucer}
	Let $A\otimes\mathcal K$ be a separable C*-algebra. Then, for every hereditary subalgebra $ B\subset A\otimes \mathcal K$, there exists a multiplier projection $P\in \mathcal M(A\otimes\mathcal K)$ such that $P(A\otimes\mathcal K)P\cong  B$.
\end{lemma}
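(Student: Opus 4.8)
The plan is to translate the hereditary subalgebra $B$ into the language of Hilbert modules, where multiplier projections arise naturally as orthogonal projections onto complemented submodules, and then to exploit stability of $D:=A\otimes\mathcal K$ in order to realize everything inside $D$ itself rather than inside the standard module. Write $E:=\overline{BD}$, the closed right ideal of $D$ generated by $B$, regarded as a right Hilbert $D$-module with inner product $\langle x,y\rangle=x^*y$. Since $D$ is separable, $B$ and hence $E$ are countably generated. The first (routine) step is to record the standard correspondence $\mathcal K(E)\cong B$: the rank-one operator $\theta_{x,y}$ acts as left multiplication by $xy^*$, so $\mathcal K(E)\cong\overline{EE^*}=\overline{BDB}=B$, using that $B$ is hereditary.

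Next I would invoke Kasparov's stabilization theorem, which gives $E\oplus\mathcal H_D\cong\mathcal H_D$ because $E$ is countably generated, where $\mathcal H_D$ denotes the standard Hilbert $D$-module. This is the point where stability of $D$ enters: for a stable C*-algebra one has the well-known identification $\mathcal H_D\cong D$ of Hilbert $D$-modules (equivalently $\mathcal M(D)\cong\mathcal L(\mathcal H_D)$), which follows from $D\cong D\otimes\mathcal K$ together with $\ell^2\otimes\ell^2\cong\ell^2$. Combining the two isomorphisms yields $E\oplus D\cong D$, so that $E$ is isomorphic to an orthogonally complemented submodule of $D$ viewed as a module over itself.

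Finally I would extract the projection. Fixing an isomorphism $\varphi\colon D\to E\oplus D$ and letting $P:=\varphi^{-1}\iota\,\pi\,\varphi$, where $\iota$ and $\pi$ are the inclusion and projection of $E$ into $E\oplus D$, one obtains an orthogonal projection $P$ in $\mathcal L(D)=\mathcal M(D)=\mathcal M(A\otimes\mathcal K)$ whose range $PD$ is isomorphic to $E$ as a Hilbert $D$-module. Since $\mathcal K(D)=D$ for $D$ over itself, this gives $P(A\otimes\mathcal K)P=PDP=\mathcal K(PD)\cong\mathcal K(E)\cong B$, the middle isomorphism being the functorial action of $\mathcal K(\cdot)$ under the unitary equivalence $PD\cong E$. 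This produces the required multiplier projection.

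The main obstacle is the identification $\mathcal H_D\cong D$ for stable $D$ and, relatedly, making the passage from a complemented submodule to a genuine projection in $\mathcal M(A\otimes\mathcal K)$ precise: one must check that $P$ is adjointable (hence a multiplier) and that the module isomorphism $PD\cong E$ induces a $*$-isomorphism of the corners, which it does because $\mathcal K(\cdot)$ is a functor respecting adjoints and carries unitary equivalences of modules to $*$-isomorphisms. Everything else — the correspondence $\mathcal K(E)\cong B$ and the bookkeeping with $\iota$ and $\pi$ — is routine.
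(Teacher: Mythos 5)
Your proof is correct. The paper does not prove this lemma at all — it quotes it from \cite{Ku04} — so there is no in-paper argument to compare against; your route (pass to the right ideal $E=\overline{B(A\otimes\mathcal K)}$ with $\mathcal K(E)\cong \overline{EE^*}=\overline{B(A\otimes\mathcal K)B}=B$, apply Kasparov stabilization to the countably generated module $E$, and use the identification $\mathcal H_D\cong D$ of Hilbert $D$-modules for the stable algebra $D=A\otimes\mathcal K$ to realize $E$ as a complemented submodule $PD$ with $P\in\mathcal L(D)=\mathcal M(A\otimes\mathcal K)$ and $P(A\otimes\mathcal K)P=\mathcal K(PD)\cong\mathcal K(E)\cong B$) is precisely the standard proof of the cited result, and it invokes stability and separability exactly where they are genuinely needed.
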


This approach is very useful to determine when a full hereditary subalgebra of $A\otimes\mathcal K$ is stable. Indeed, we have the next important result shown by Brown in \cite[Theorem 4.23]{Bro88}.
\begin{theorem}{\rm(\cite{Bro88})}\label{ThmBrown}
	Let $A\otimes\mathcal K$ be a separable C*-algebra, and $P$ a multiplier projection in $\mathcal M(A\otimes\mathcal K)$. Then $P(A\otimes\mathcal K)P$ is a stable, full, hereditary subalgebra of $A\otimes\mathcal K$ if and only if $P$ is Murray-von Neumann equivalent to the unit of $\mathcal M(A\otimes\mathcal K)$.
\end{theorem}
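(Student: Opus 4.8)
The plan is to pass to the Hilbert-module picture of $A\otimes\mathcal{K}$ and reduce the statement to an isomorphism of standard modules. Writing $\mathcal{B}:=A\otimes\mathcal{K}=\mathcal{K}(H_A)$ with $H_A=\ell^2(A)$ the standard Hilbert $A$-module, we have $\mathcal{M}(\mathcal{B})=\mathcal{L}(H_A)$, and a multiplier projection $P$ is exactly an orthogonal projection onto a complemented submodule $E:=PH_A$, with $P\mathcal{B}P=\mathcal{K}(E)$. Under this dictionary, $P\mathcal{B}P$ being \emph{full} hereditary in $\mathcal{B}$ corresponds to $E$ being a full Hilbert $A$-module, while $P$ being Murray--von Neumann equivalent to $1_{\mathcal{M}(\mathcal B)}$ means precisely that there is an isometry $V\in\mathcal{L}(H_A)$ with $V^*V=1$ and $VV^*=P$, i.e.\ that $V$ identifies $H_A$ isometrically with $E$. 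Thus the theorem becomes the assertion that $\mathcal{K}(E)$ is stable and $E$ is full if and only if $E\cong H_A$.

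The direction $(\Leftarrow)$ is immediate: an isomorphism $E\cong H_A$ yields $\mathcal{K}(E)\cong\mathcal{K}(H_A)=A\otimes\mathcal{K}$, which is stable, and $E$ is full because $H_A$ is. Equivalently, if $V$ is the implementing isometry then $x\mapsto VxV^*$ is an isomorphism of $\mathcal{B}$ onto $P\mathcal{B}P$, and the identity $cV^*\,(VxV^*)\,Vd=cxd$ for $c,d,x\in\mathcal{B}$ (using that $cV^*,Vd\in\mathcal{B}$) shows the ideal generated by $P\mathcal{B}P$ is all of $\mathcal{B}$.

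For the substantive direction $(\Rightarrow)$ I would assemble three module facts and then run an absorption (Eilenberg-swindle) argument. First, stability of $\mathcal{K}(E)$ supplies a system of matrix units $\{e_{ij}\}$ for a copy of $\mathcal{K}$ sitting in $\mathcal{M}(\mathcal{K}(E))=\mathcal{L}(E)$; cutting $E$ by the orthogonal projections $e_{ii}$ exhibits a decomposition $E\cong G^{\oplus\infty}$ for a countably generated $G$, and hence $E\cong E^{\oplus\infty}$. Second, by Kasparov's stabilization theorem $E^{\oplus\infty}\oplus H_A\cong H_A$, so $E^{\oplus\infty}$ is a complemented submodule of $H_A$. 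Third, fullness of $E$ lets one embed $H_A$ as a complemented submodule of $E^{\oplus\infty}$, say $E^{\oplus\infty}\cong H_A\oplus M$. The swindle then closes the gap: from the third fact $E^{\oplus\infty}\cong(E^{\oplus\infty})^{\oplus\infty}\cong(H_A\oplus M)^{\oplus\infty}\cong H_A\oplus M^{\oplus\infty}$, while the second gives $H_A\cong E^{\oplus\infty}\oplus H_A\cong H_A\oplus M^{\oplus\infty}$; comparing these, $E^{\oplus\infty}\cong H_A\oplus M^{\oplus\infty}\cong H_A$. Combined with the first fact we obtain $E\cong E^{\oplus\infty}\cong H_A$, which produces the isometry $V$ and hence $P\sim 1_{\mathcal{M}(\mathcal B)}$.

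The main obstacle is entirely in $(\Rightarrow)$, and it is the faithful translation of the two C*-hypotheses into genuine module isomorphisms. Stability must be used \emph{internally}: it is not enough to know $\mathcal{K}(E)\cong\mathcal{K}(E)\otimes\mathcal{K}$ as abstract algebras (that only controls Morita/Picard data), one needs the matrix units realized inside $\mathcal{L}(E)$ to get the concrete splitting $E\cong E^{\oplus\infty}$. Dually, extracting from fullness that $H_A$ is a \emph{complemented} submodule of $E^{\oplus\infty}$ is where separability and $\sigma$-unitality of $A\otimes\mathcal{K}$ are spent, and it is essentially the module form of the Brown--Green--Rieffel stable isomorphism theorem. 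I would stress that the abstract stable isomorphism $P\mathcal{B}P\cong A\otimes\mathcal{K}$ coming from Brown--Green--Rieffel is by itself insufficient for the conclusion, since the theorem demands an equivalence implemented by a partial isometry in $\mathcal{M}(A\otimes\mathcal{K})$; the module argument above is precisely what upgrades the abstract isomorphism to this inner one.
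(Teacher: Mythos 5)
Your proof is correct, but note that the paper contains no proof of this statement to compare against: Theorem \ref{ThmBrown} is imported verbatim from Brown \cite{Bro88} (Theorem 4.23 there), so the relevant comparison is with Brown's original argument, which is operator-theoretic and carried out directly inside $\mathcal M(A\otimes\mathcal K)$ with strict-topology approximation and his semicontinuity machinery. You instead translate everything into the Hilbert-module picture --- $P(A\otimes\mathcal K)P=\mathcal K(E)$ for $E=PH_A$, fullness of the hereditary subalgebra equals fullness of the module (correct, since ideals of $A\otimes\mathcal K$ correspond to ideals of $A$ and the ideal generated by $\mathcal K(E)$ corresponds to $\overline{\langle E,E\rangle}$), and $P\sim 1_{\mathcal M(A\otimes\mathcal K)}$ equals $E\cong H_A$ --- and then run Kasparov stabilization plus an Eilenberg swindle. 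This is in essence the Mingo--Phillips triviality theorem (a full countably generated module over a $\sigma$-unital algebra has $E^{\oplus\infty}\cong H_A$), and it is the modern standard route; what it buys over Brown's approach is that it isolates exactly where each hypothesis is spent, and it makes visible the point the present paper cares about, namely that the equivalence is implemented by a partial isometry in the multiplier algebra rather than by an abstract stable isomorphism. Your swindle itself is airtight: from $E^{\oplus\infty}\cong H_A\oplus M$ and $H_A^{\oplus\infty}\cong H_A$ one gets $E^{\oplus\infty}\cong H_A\oplus M^{\oplus\infty}$, and Kasparov applied to the countably generated $E^{\oplus\infty}$ gives $H_A\cong E^{\oplus\infty}\oplus H_A\cong H_A\oplus M^{\oplus\infty}$, whence $E\cong E^{\oplus\infty}\cong H_A$. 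Two small points to make explicit if you write this up: in your first step the copy of $\mathcal K$ inside $\mathcal L(E)$ must be nondegenerate, i.e.\ $\sum_i e_{ii}\to 1$ strictly (automatic if you transport the canonical matrix units through an isomorphism $\mathcal K(E)\cong\mathcal K(E)\otimes\mathcal K$), since otherwise the $e_{ii}$ need not decompose $E$; and your third step, which you rightly flag as the crux, rests on the standard lemma that fullness plus $\sigma$-unitality of $A$ yield vectors $\xi_k\in E$ with $\sum_k\langle\xi_k,\xi_k\rangle$ strictly convergent to $1$ in $\mathcal M(A)$, so that $a\mapsto(\xi_k a)_k$ extends to an adjointable isometry $H_A\to E^{\oplus\infty}$; this is where the separability hypothesis of the theorem is genuinely used, exactly as you say.
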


This analytical side provides us a new definition of property (S), as stated below. We start recalling some definitions.
\begin{definition}
	Let $A$ be a unital C*-algebra. We say that $A\otimes\mathcal K$ has property R, if whenever $p$ is a projection contained inside a proper ideal of $\mathcal M (A\otimes\mathcal K)$, then $p$ is also contained inside a regular ideal.
\end{definition}
Recall that any unital trace on $A$ extends canonically to a trace on the positive cone of $\mathcal M(A\otimes\mathcal K)$. We use this extension to say that a proper ideal $\mathcal J$ of $\mathcal M(A\otimes\mathcal K)$ is regular if $\mathcal J$ is contained in an ideal of the form $\mathcal J_\tau$ (norm-closure of $\{b\in \mathcal M(A\otimes\mathcal K)\mid \tau(b^*b)<\infty\}$), for some unital $\tau$ on $A$. Otherwise, we say that $\mathcal J$ is non-regular.

\begin{proposition}\label{prop:PropS}
	Let $A$ be a unital separable simple exact C*-algebra, and $P$ a multiplier projection  in $\mathcal M(A\otimes\mathcal K)$ defining a full hereditary C*-subalgebra $P(A\otimes\mathcal K)P$ of $  A\otimes \mathcal K$. Then, $P(A\otimes\mathcal K)P$ satisfies property (S) if and only if $P$ belongs to a non-regular ideal of $\mathcal M(A\otimes\mathcal K)$.
\end{proposition}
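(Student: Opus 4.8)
The plan is to unwind property (S) into its two defining ingredients --- absence of nonzero unital quotients and absence of bounded $2$-quasitraces --- and to read each of them off directly from the multiplier projection $P$ through the trace ideals $\mathcal J_\tau$. Throughout write $D:=P(A\otimes\mathcal K)P$.

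First I would dispose of the quotient condition using simplicity. Since $A$ is simple, $A\otimes\mathcal K$ is simple, and hence the hereditary subalgebra $D$ is itself simple; in particular fullness of $D$ is automatic. Therefore $D$ has no nonzero unital quotient if and only if $D$ is non-unital, and this happens exactly when $P\notin A\otimes\mathcal K$: if $P\in A\otimes\mathcal K$ then $P$ is the unit of $D$, while conversely the unit of a unital $D$ must coincide with its support projection $P$, forcing $P\in A\otimes\mathcal K$.

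Next I would handle the quasitracial condition. As $A$ is exact, every $2$-quasitrace on $D$ is a trace by Haagerup's theorem, so it suffices to control bounded traces. Using that $D$ is a full hereditary subalgebra of the stable algebra $A\otimes\mathcal K$, the densely defined lower-semicontinuous traces on $D$ are precisely the restrictions of those on $A\otimes\mathcal K$, which in turn are the canonical extensions $\widetilde\tau=\tau\otimes\mathrm{Tr}$ of traces $\tau$ on $A$ (which, on the unital $A$, are bounded, hence normalizable to unital $\tau$). The restriction of $\widetilde\tau$ to $D$ is bounded precisely when $\widetilde\tau(P)<\infty$. Thus $D$ admits a bounded $2$-quasitrace if and only if $\widetilde\tau(P)<\infty$ for some unital $\tau$. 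I would then match this with the trace ideals: for a projection one has $P\in\mathcal J_\tau$ if and only if $\widetilde\tau(P)=\widetilde\tau(P^*P)<\infty$ (the forward direction since $P$ then lies in the defining set; the converse because a projection in the norm-closed trace ideal has finite trace, exactly as in the prototype $\mathcal K(H)=\overline{\{\text{Hilbert--Schmidt}\}}$, where a projection is compact iff it has finite rank). Since $\mathcal J_\tau$ is an ideal, $P\in\mathcal J_\tau$ is the same as $I_P\subseteq\mathcal J_\tau$, where $I_P$ denotes the ideal of $\mathcal M(A\otimes\mathcal K)$ generated by $P$. Hence $D$ has a bounded $2$-quasitrace if and only if $P$ lies in a regular ideal.

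Finally I would assemble the two translations: property (S) holds if and only if $P\notin A\otimes\mathcal K$ and $\widetilde\tau(P)=\infty$ for every unital $\tau$. Since the Pedersen ideal of $A\otimes\mathcal K$ is dense, $A\otimes\mathcal K\subseteq\mathcal J_\tau$ for every $\tau$, so $A\otimes\mathcal K$ is itself a regular ideal; consequently the condition ``$\widetilde\tau(P)=\infty$ for all $\tau$'' already subsumes $P\notin A\otimes\mathcal K$. Therefore property (S) is equivalent to ``$P$ lies in no regular ideal'', which is exactly the assertion that the principal ideal $I_P$ is proper and non-regular, i.e. that $P$ belongs to a non-regular ideal. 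The \textbf{main obstacle} I expect is the trace bookkeeping of the middle step: making precise the bijection between bounded traces on the full hereditary subalgebra $D$ and finiteness of the extended trace $\widetilde\tau$ on the support projection $P$, together with the norm-closure subtlety in the equivalence $P\in\mathcal J_\tau\iff\widetilde\tau(P)<\infty$; everything else is either the exactness reduction or a direct consequence of simplicity.
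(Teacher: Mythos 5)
Your proposal is correct and takes essentially the same route as the paper's own proof, which is just a compressed version of your argument: it too splits property (S) into the no-unital-quotient half (disposed of by simplicity of the hereditary subalgebra $P(A\otimes\mathcal K)P$) and the no-bounded-$2$-quasitrace half (where exactness is what permits working with traces), translating the latter into membership of $P$ in some trace ideal $\mathcal J_\tau$. The bookkeeping you flag as the main obstacle --- bounded traces on $P(A\otimes\mathcal K)P$ correspond to unital traces $\tau$ on $A$ with $\widetilde\tau(P)<\infty$, and $P\in\mathcal J_\tau$ iff $\widetilde\tau(P)<\infty$ --- is exactly what the paper's two-paragraph proof leaves implicit, as is the final identification (made equally loosely by the paper) of ``$P$ is contained in no regular ideal'' with ``$P$ belongs to a non-regular ideal''.
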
 
\begin{proof}
	Let $P$ be a multiplier projection in $\mathcal M(A\otimes\mathcal K)$ as in the statement. Then, assume that the hereditary C*-subalgebra $P(A\otimes\mathcal K)P$ satisfies property (S). The definition of property (S) asks for the lack of bounded 2-quasitraces; hence, $P$ cannot be contained in any regular ideal of $A\otimes\mathcal K$. Namely, $P$ belongs to a non-regular ideal.
	
	For the converse, let us assume that $P$ belongs to a non-regular ideal of $\mathcal M(A\otimes\mathcal K)$. Then, the hereditary C*-subalgebra in $A\otimes\mathcal K$ defined via $P$, i.e. $P(A\otimes\mathcal K)P\subseteq A\otimes\mathcal K$, has  no nonzero bounded 2-quasitraces and no quotient is unital due simplicity of $A$. Therefore, $P(A\otimes\mathcal K)P$ satisfies property (S) by definition.
	\end{proof}

Using Brown's result described in Theorem \ref{ThmBrown}, we use the above Proposition to see when property (S) implies stability (up to some matrix extension). Indeed, this happens when there is no proper non-regular ideals in $\mathcal M(A\otimes\mathcal K)$. In other words, when property R holds. Next result shows the equivalence between this property and asymptotic regularity. 

\begin{theorem}\label{Thm:RAssym}
	Let $A$ be a unital separable simple exact C*-algebra. Then $A\otimes\mathcal K$ is asymptotically regular if and only if $A\otimes \mathcal K$ satisfies property $R$.
	\end{theorem}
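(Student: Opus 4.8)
The plan is to prove both directions of the equivalence by routing through the analytical characterization of property (S) from Proposition \ref{prop:PropS} and the characterization of asymptotic regularity in terms of matrix-stable full hereditary subalgebras with property (S). Throughout, I would exploit Theorem \ref{ThmBrown}, which translates stability of a full hereditary subalgebra $P(A\otimes\mathcal K)P$ into the Murray--von Neumann equivalence $P\sim 1_{\mathcal M(A\otimes\mathcal K)}$, together with Lemma \ref{lemKucer}, which lets me freely pass between hereditary subalgebras and multiplier projections. The key dictionary is: a full hereditary subalgebra has property (S) if and only if its associated multiplier projection $P$ lies in a non-regular ideal (Proposition \ref{prop:PropS}); and $M_n(D)$ stable corresponds, via $M_n(D)\cong (P\otimes 1_n)(A\otimes\mathcal K)(P\otimes 1_n)$ after the standard identification $M_n(A\otimes\mathcal K)\cong A\otimes\mathcal K$, to $P\otimes 1_n$ being Murray--von Neumann equivalent to the unit.

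First I would prove that property $R$ implies asymptotic regularity. Assume property $R$ holds, and let $D$ be a full hereditary subalgebra of $A\otimes\mathcal K$ with property (S). By Lemma \ref{lemKucer} write $D\cong P(A\otimes\mathcal K)P$ for a multiplier projection $P$. Since $D$ has property (S), Proposition \ref{prop:PropS} forces $P$ into a non-regular ideal of $\mathcal M(A\otimes\mathcal K)$. The intended contrapositive reading of property $R$ is that a projection lying in a non-regular proper ideal cannot in fact sit in any \emph{proper} ideal at all (otherwise property $R$ would place it inside a regular one, contradicting non-regularity); hence $P$ generates all of $\mathcal M(A\otimes\mathcal K)$ as an ideal, i.e. $P$ is a full multiplier projection. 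The remaining task is to pass from fullness of $P$ to the existence of $n$ with $P\otimes 1_n\sim 1$, which by Theorem \ref{ThmBrown} yields stability of $M_n(D)$. Here I would invoke the scaling/halving structure of $\mathcal M(A\otimes\mathcal K)$ and exactness of $A$ to produce such an $n$, so that $M_n(D)$ is stable and $A$ is asymptotically regular.

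For the converse, assume $A\otimes\mathcal K$ is asymptotically regular and suppose a projection $p\in\mathcal M(A\otimes\mathcal K)$ lies in a proper ideal; I must show it lies in a regular ideal. Arguing by contradiction, suppose $p$ lies in a non-regular ideal. Then $D:=p(A\otimes\mathcal K)p$ is full (since $A$ is simple, as in Proposition \ref{prop:PropS}) and, by Proposition \ref{prop:PropS} again, has property (S). Asymptotic regularity produces $n$ with $M_n(D)$ stable, so by Theorem \ref{ThmBrown} $p\otimes 1_n\sim 1_{\mathcal M(A\otimes\mathcal K)}$. This would make the ideal generated by $p$ all of $\mathcal M(A\otimes\mathcal K)$, contradicting that $p$ sits in a proper ideal. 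Hence $p$ cannot lie in a non-regular ideal; being in a proper ideal, it must lie in a regular one, which is precisely property $R$.

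The main obstacle I anticipate is the bookkeeping around the two distinct notions of ``ideal membership'' that property $R$ balances---proper versus regular versus non-regular---and making the contradiction in each direction airtight. In particular, the delicate point in the forward direction is converting fullness of the multiplier projection $P$ into matrix-stability of $D$: Theorem \ref{ThmBrown} gives stability from $P\sim 1$, but fullness alone only gives $1\precsim P\otimes 1_m$ for some $m$ in the relevant comparison, and I would need to upgrade a one-sided subequivalence to an actual Murray--von Neumann equivalence $P\otimes 1_n\sim 1$. This is exactly where exactness of $A$ and the simple, stably finite structure enter, and I expect the careful verification that the comparison of multiplier projections behaves well enough to close this gap to be the technical heart of the argument.
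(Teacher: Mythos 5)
Your proposal follows essentially the same route as the paper's proof: both directions run through Lemma \ref{lemKucer}, Proposition \ref{prop:PropS} and Brown's Theorem \ref{ThmBrown}, with the same proof-by-contradiction structure in each direction, so in outline your argument is the paper's argument. Two points, however, need repair before it closes. First, the step you flag as the ``technical heart'' --- upgrading norm-fullness of $P$ to $\oplus^n P\sim 1_{\mathcal M(A\otimes\mathcal K)}$ for some $n$ --- is where your plan points at the wrong tools: neither exactness nor stable finiteness is what closes this gap. It is a standard fact about multiplier algebras of $\sigma$-unital stable C*-algebras: norm-fullness yields $1=\sum_{i=1}^{n}x_iPy_i$, hence $1\precsim \oplus^n P$; since the unit of $\mathcal M(A\otimes\mathcal K)$ is properly infinite (it contains $\mathcal B(H)$ unitally), one also has $\oplus^n P\precsim \oplus^n 1\sim 1$; and because $\oplus^n P$ dominates a full properly infinite subprojection it is itself full and properly infinite, so the mutual subequivalence of full properly infinite projections upgrades to Murray--von Neumann equivalence (cf.\ \cite{KR00}, \cite{KN06}). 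The paper invokes exactly this fact, without further argument, at the same spot; exactness enters the theorem only through Proposition \ref{prop:PropS}, where bounded 2-quasitraces must be matched with the traces $\tau$ defining the regular ideals $\mathcal J_\tau$.

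Second, a logical slip in your first direction: membership of $P$ in a non-regular ideal does not by itself ``contradict'' membership in a regular one --- a projection can lie in many ideals simultaneously, and ``non-regular'' is a property of the ideal, not of the projection. The correct argument, which is the one the paper uses, is that property (S) gives no nonzero bounded 2-quasitraces on $P(A\otimes\mathcal K)P$, whereas $P\in\mathcal J_\tau$ would force $\tau(P)<\infty$ and hence produce a bounded trace on that hereditary subalgebra; so $P$ lies in \emph{no} regular ideal, and property $R$ (contrapositively) then forces $P$ to lie in no proper ideal at all, i.e.\ to be norm-full. Your converse direction is fine as stated (and its contradiction framing even sidesteps the separate case $P\in A\otimes\mathcal K$ that the paper treats explicitly, since a unital hereditary subalgebra never has property (S)). With these two repairs your argument coincides with the paper's.
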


\begin{proof}
	Let us first show the "only-if" direction. Let $P$ be a multiplier projection in $\mathcal M(A\otimes \mathcal K)$ such that $P$ is contained in a proper ideal of $\mathcal M(A\otimes\mathcal K)$. If $P\in A\otimes \mathcal K$, then automatically $P$ is contained in a regular ideal of $\mathcal M(A\otimes \mathcal K)$; hence, we assume that $P$ is not in $A\otimes\mathcal K$.
	
	In this case, suppose that $P$ is not contained in any regular ideal of $\mathcal M(A\otimes \mathcal K)$. This implies that $P(A\otimes\mathcal K)P$ must be a full hereditary subalgebra of $A\otimes\mathcal K$, with no unital quotient and no nonzero bounded 2 quasitraces, i.e., it satisfies property (S). By the assumption of asymptotic regularity, there exists $n\in\mathbb N$ such that $M_n(P(A\otimes\mathcal K)P)$ is stable; namely, the sum of $n$-copies of $P$, i.e. $\oplus^nP$, is Murray-von Neumann equivalent to the unit of $\mathcal M(A\otimes\mathcal K)$ (see Theorem \ref{ThmBrown}). This contradicts our assumption that $P$ is contained in a proper ideal of $\mathcal M(A\otimes \mathcal K)$; therefore, $P$ belongs to a regular ideal of $\mathcal M(A\otimes\mathcal K)$, and property $R$ holds.
	
	 For the converse, let $D$ be a nonzero hereditary subalgebra of $A\otimes\mathcal K$ satisfying property (S). Use Lemma \ref{lemKucer} to find the multiplier projection $P$ in $\mathcal M(A\otimes \mathcal K)$ such that $P(A\otimes\mathcal K)P$ is isomorphic to $D$. By the lack of nonzero bounded 2-quasitraces given by property (S), the projection $P$ cannot be contained in a regular ideal of $\mathcal M(A\otimes\mathcal K)$. Therefore, $P$ is a norm-full element in $\mathcal M(A\otimes\mathcal K)$ due to property $R$.
	 
	  Since $P$ is norm-full element, there exists $n\in\mathbb N$ such that $\oplus^n P$ is Murray-von Neumann equivalent to the unit of $\mathcal M(A\otimes\mathcal K)$. Therefore, $M_n(P(A\otimes\mathcal K)P)\cong M_n(D)$ is stable by Theorem \ref{ThmBrown} as desired.
	\end{proof} 

A simple combination of Proposition \ref{Prop:AsComp} and Theorem \ref{Thm:RAssym} provides the following:
\begin{corollary}\label{Cor:Dich}
Let $A$ be a unital separable simple exact C*-algebra such that $A\otimes \mathcal K$ satisfies property $R$. Then $A$ is either stably finite or purely infinite.
\end{corollary}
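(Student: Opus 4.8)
The plan is to chain Theorem~\ref{Thm:RAssym} and Proposition~\ref{Prop:AsComp}, both of which are already available.

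First I would verify that the hypotheses of Theorem~\ref{Thm:RAssym} are met: $A$ is unital, separable, simple and exact, and by assumption $A\otimes\mathcal K$ satisfies property $R$. Thus Theorem~\ref{Thm:RAssym} applies and yields that $A\otimes\mathcal K$ is asymptotically regular. (Exactness of $A$ plays no further role here; it enters only through the use of Proposition~\ref{prop:PropS} inside the proof of Theorem~\ref{Thm:RAssym}.)

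Next I would transfer asymptotic regularity to $A$ itself. The notion is defined entirely in terms of the full hereditary subalgebras of the stabilisation, and since $(A\otimes\mathcal K)\otimes\mathcal K\cong A\otimes\mathcal K$, the statements ``$A$ is asymptotically regular'' and ``$A\otimes\mathcal K$ is asymptotically regular'' refer to the same class of subalgebras and hence coincide. Therefore $A$ is asymptotically regular. As $A$ is simple and separable, Proposition~\ref{Prop:AsComp} then applies directly and gives that $A$ is either stably finite or purely infinite, which is exactly the claim.

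The only point requiring care---rather than a genuine obstacle---is the identification of asymptotic regularity of $A$ with that of $A\otimes\mathcal K$ just used. If one prefers to avoid it, one may instead apply Proposition~\ref{Prop:AsComp} directly to the simple separable algebra $A\otimes\mathcal K$ to obtain the dichotomy for $A\otimes\mathcal K$, and then invoke the fact that, for a simple C*-algebra, both stable finiteness and pure infiniteness pass between $A$ and its stabilisation $A\otimes\mathcal K$. Either route makes the combination immediate, so I expect no serious difficulty beyond this bookkeeping.
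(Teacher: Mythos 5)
Your proposal is correct and is essentially the paper's own argument: the paper obtains Corollary~\ref{Cor:Dich} precisely as a ``simple combination'' of Theorem~\ref{Thm:RAssym} and Proposition~\ref{Prop:AsComp}, the same chain you use. The bookkeeping you add---identifying asymptotic regularity of $A$ with that of $A\otimes\mathcal K$ via $(A\otimes\mathcal K)\otimes\mathcal K\cong A\otimes\mathcal K$, or equivalently applying Proposition~\ref{Prop:AsComp} to $A\otimes\mathcal K$ and transferring the dichotomy through stabilisation---is left implicit in the paper and is handled correctly in your write-up.
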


\begin{remark}
	One may wonder whether the converse of last corollary (or equivalently Proposition \ref{Prop:AsComp}) is true. The answer to that is negative, i.e. there exist a separable simple stably finite C*-algebra that does not satisfy property $R$. The counterexample satisfying that condition appeared in \cite{Petz13}. In there it is constructed a stably finite projection $Q$ in the multiplier algebra of a separable stable simple C*-algebra $A$, which satisfies that $Q=\sum^\infty_{j=1}p_j,$ where $p_j$ are pairwise orthogonal projections in $A$ satisfying $\lambda(p_i)=\lambda(p_j)<\infty$ for all $i, j$ and all functionals $\lambda$.
	
	 Stably finiteness is not specified in \cite{Petz13}, but it follows from the  definition of projection $Q$. Indeed, if for the contrary $\infty\in \Cu(A)$ was a compact element, then some multiple of $\la \sum^\infty_{i=1}p_j\ra$ would be equal than $\infty$. Using compacity of $\infty$, that implies that $n\sum^m_{i=1}p_i=\infty$ for some $n,m\in\mathbb N$. Namely, $\lambda(p_i)=\infty$ for all functionals, contradicting the finiteness of them. 
	 
	 This example shows that not all stably finite exact C*-algebras admits a bounded trace.
\end{remark}

An equivalent formulation of Corona Factorization Property for separable simple C*-algebras is that any norm-full multiplier projection $P\in\mathcal M(A\otimes K)$ is Murray-von Neumann equivalent to $1_{\mathcal M(A\otimes \mathcal K)}$ (see \cite{KN06} for further details). Roughly speaking, {\rm CFP} allows us to pass from $n$-size of stability, to stability itself. Hence, Theorem \ref{Thm:RAssym} induces the following:
\begin{corollary}\label{Cor:Equal}
	Let $A$ be a unital separable simple exact C*-algebra. Then, the following are equivalent:
	\begin{enumerate}
		\item $A$ is regular 
		\item $A$ is asymptotically regular and satisfies {\rm CFP}
		\item $A$ satisfies property $R$ and  {\rm CFP}
	\end{enumerate}
\end{corollary}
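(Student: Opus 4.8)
The plan is to prove the two nontrivial equivalences by reducing everything to the behaviour of property (S) under matrix amplification together with the already-established Theorem \ref{Thm:RAssym}. First I would dispose of (2) $\iff$ (3): asymptotic regularity and property $R$ are both phrased in terms of $A\otimes\mathcal K$ and are invariant under tensoring with $\mathcal K$ (since $(A\otimes\mathcal K)\otimes\mathcal K\cong A\otimes\mathcal K$), so Theorem \ref{Thm:RAssym} states precisely that asymptotic regularity is equivalent to property $R$. Because CFP appears unchanged in both (2) and (3), those two conditions coincide, and the genuine content of the corollary is the equivalence (1) $\iff$ (2).

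For the implication (1) $\Rightarrow$ (2) I would argue in two steps. Asymptotic regularity is immediate from regularity: if every full hereditary subalgebra $D$ of $A\otimes\mathcal K$ with property (S) is already stable, then in particular $M_n(D)$ is stable for $n=1$, so $A$ is asymptotically regular. The substance is that regularity implies CFP. Here I would take a full hereditary subalgebra $D$ of $A\otimes\mathcal K$ with $M_n(D)$ stable for some $n$, and show that $D$ itself satisfies property (S); regularity then forces $D$ to be stable, which is exactly CFP. To see that $D$ has property (S), I use that a stable C*-algebra has property (S), so $M_n(D)$ has no bounded 2-quasitrace and no nonzero unital quotient, and then I transfer these two features down to $D$.

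For the converse (2) $\Rightarrow$ (1), let $D$ be a full hereditary subalgebra of $A\otimes\mathcal K$ with property (S). Asymptotic regularity produces an integer $n$ with $M_n(D)$ stable, and CFP then descends this to stability of $D$ itself; thus every such $D$ is stable, i.e. $A$ is regular. Combined with the previous paragraph this yields (1) $\iff$ (2), and together with (2) $\iff$ (3) it completes the proof.

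The main obstacle is the matrix-amplification invariance of property (S) used in the CFP step: I must verify that $D$ inherits the absence of bounded 2-quasitraces and of unital quotients from $M_n(D)$. For the quasitraces this rests on the standard correspondence between bounded 2-quasitraces on $D$ and on $M_n(D)\cong D\otimes M_n$ given by the amplification $\tau\mapsto\tau\otimes\mathrm{Tr}_n$, so $D$ carries a bounded 2-quasitrace exactly when $M_n(D)$ does. For unital quotients, any unital quotient $D/I$ would produce a unital quotient $M_n(D)/M_n(I)\cong M_n(D/I)$ of $M_n(D)$; as $M_n(D)$ is stable it has none, hence neither does $D$. This bookkeeping is elementary but requires care with the functoriality of 2-quasitraces, and it is the only place where real work is needed.
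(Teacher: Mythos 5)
Your proposal is correct and follows exactly the route the paper intends: the paper states this corollary without proof as an immediate consequence of Theorem \ref{Thm:RAssym} (which gives (2) $\iff$ (3)), with (1) $\iff$ (2) being the definitional unwinding of regularity into asymptotic regularity plus CFP. Your careful verification that property (S) passes between $D$ and $M_n(D)$ --- via the Blackadar--Handelman amplification of bounded 2-quasitraces and the fact that nonzero stable C*-algebras have no unital quotients --- is precisely the routine bookkeeping the paper leaves implicit, and it is done correctly.
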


Focusing on the real rank zero setting, Ng also described asymptotic regularity in \cite[Proposition 4.9]{NgCFP}. In particular, he showed the following, which we recall (without proof) by completeness.
\begin{proposition}\label{prop:notstable}
	Let $A$ be separable simple exact real rank zero C*-algebra. If there exists a norm-full multiplier projection $P\in\mathcal M(A\otimes\mathcal K)$ not Murray-von Neumann equivalent to $1_{\mathcal M(A\otimes\mathcal K)}$, then there exists a full hereditary subalgebra $D\subseteq A\otimes\mathcal K$ satisfying property (S) such that $M_n(D)$ is not stable for any $n\in\mathbb N$.
\end{proposition}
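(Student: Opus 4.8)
The plan is to take the corner determined by $P$ as the candidate for $D$, to settle property {\rm (S)} and the non-stability of $D$ itself cheaply, and then to treat the upgrade from the single non-equivalence $P\not\sim 1$ to \emph{all} matrix amplifications as the real work.

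First I would set $D:=P(A\otimes\mathcal K)P$, which is realized as a corner by Lemma \ref{lemKucer} and is a \emph{full} hereditary subalgebra because $P$ is norm-full. Property {\rm (S)} is then almost free. Norm-fullness forces $P\notin A\otimes\mathcal K$ (otherwise the ideal of $\mathcal M(A\otimes\mathcal K)$ generated by $P$ would lie inside the proper ideal $A\otimes\mathcal K$), so $D$ is non-unital; being a hereditary subalgebra of the simple algebra $A\otimes\mathcal K$ it is simple, hence has no nonzero unital quotient. Likewise the only ideal of $\mathcal M(A\otimes\mathcal K)$ containing the norm-full $P$ is the whole algebra, which is non-regular since it contains $1$ and $\tau(1)=\infty$ for every bounded trace $\tau$, so $P$ cannot sit in any $\mathcal J_\tau$; thus $D$ carries no bounded $2$-quasitrace. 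Hence $D$ has property {\rm (S)} (this is exactly the content of Proposition \ref{prop:PropS}), and $D$ is not stable, because by Theorem \ref{ThmBrown} stability of $D$ would force $P\sim 1_{\mathcal M(A\otimes\mathcal K)}$.

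The substance is the amplifications. Identifying $M_n(A\otimes\mathcal K)\cong A\otimes\mathcal K$ we have $M_n(D)\cong(\oplus^nP)(A\otimes\mathcal K)(\oplus^nP)$, so by Theorem \ref{ThmBrown} the condition ``$M_n(D)$ stable'' is equivalent to $\oplus^nP\sim 1$. The hypothesis only gives the case $n=1$, and since we do \emph{not} assume {\rm CFP}, it is genuinely possible that some amplification $\oplus^{n_0}P$ becomes equivalent to $1$ while $P$ is not. To handle this I would dilute $P$ using real rank zero: build a norm-full projection $\tilde P=\sum_k q_k$ as a strictly convergent orthogonal sum of nonzero projections $q_k\in A\otimes\mathcal K$ (these exist in abundance since $A\otimes\mathcal K$ is simple of real rank zero), chosen by repeated halving so that the partial sums grow slowly enough that no finite amplification $\oplus^n\tilde P$ can reach $1$, while $\tilde P$ still has infinite mass under every functional and hence remains norm-full. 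One may organize this more intrinsically through Lemma \ref{lem:AsComp}: the desired conclusion is precisely that $A$ fails to be asymptotically regular, equivalently that $\Cu(A)$ fails asymptotic $\omega$-comparison, so it suffices to exhibit nonzero $y_1<_sy_2<_s\cdots$ in $\Cu(A)_{\ll\infty}$ admitting no single $n$ with $n\sum_{i\geq m}y_i=\infty$ for all $m$. Taking $y_k=\la q_k\ra$ does this, and the $<_s$-increase makes the values $\lambda(y_k)$ strictly increasing, whence $\lambda(\sum_ky_k)=\infty$ for every $\lambda$ --- which is exactly the infinite mass property {\rm (S)} requires. The hypothesis (a norm-full $P$ with $P\not\sim 1$, i.e.\ the failure of the {\rm CFP}-type equivalence) is what seeds the required non-comparison; this is the mechanism realized concretely by the \cite{Petz13} example in the preceding remark, where all $p_j$ carry equal finite functional value.

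The hard part is this last balancing act. One must \emph{simultaneously} keep $\tilde P$ (equivalently $\sum_ky_k$) full --- infinite under every functional, which yields both norm-fullness and property {\rm (S)} --- and keep \emph{every} finite multiple deficient, so that $\oplus^n\tilde P\not\sim 1$ for all $n$. In the stably finite regime these requirements pull in opposite directions: a fast-growing sum risks $\oplus^2\tilde P\sim 1$, while a slow-growing sum risks finite total mass. Reconciling them is exactly where real rank zero is indispensable, since it lets one split projections with controlled Cuntz comparison and assemble a strictly convergent sum with the precise growth rate needed, mirroring the construction underlying \cite{Petz13}.
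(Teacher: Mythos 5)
Note first that the paper does not prove this proposition: it is explicitly recalled \emph{without proof} from \cite[Proposition 4.9]{NgCFP}, so your attempt can only be measured against Ng's argument and against its own internal logic. Your first half is fine. Taking $D=P(A\otimes\mathcal K)P$ (Lemma \ref{lemKucer}), simplicity of $A\otimes\mathcal K$ gives fullness and the absence of unital quotients (non-unitality because $P\notin A\otimes\mathcal K$), a bounded $2$-quasitrace on $D$ would place $P$ in a regular, hence proper, ideal, contradicting norm-fullness --- this is the mechanism of Proposition \ref{prop:PropS}, though strictly speaking ``non-regular'' is defined in the paper only for proper ideals, so the clean formulation is ``$P$ lies in no regular ideal'' --- and Theorem \ref{ThmBrown} gives non-stability of $D$ itself. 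You also correctly identify that the whole content of the proposition is the upgrade from $P\not\sim 1$ to a \emph{single} hereditary subalgebra all of whose matrix amplifications are non-stable, and you correctly translate this, via Lemma \ref{lem:AsComp}, into exhibiting a failure of asymptotic $\omega$-comparison in $\Cu(A)$.

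But that upgrade is precisely where your proof stops being a proof. The construction of $\tilde P$ is never carried out, and as sketched it is internally inconsistent: you want the $q_k$ produced by ``repeated halving'' with slowly growing partial sums, yet simultaneously assert $\la q_k\ra<_s\la q_{k+1}\ra$. Halving makes $\lambda(\la q_k\ra)$ decay geometrically, so for any functional with $\lambda(\la q_1\ra)<\infty$ the total mass is finite and property {\rm (S)} is lost; a $<_s$-increasing sequence has infinite total mass automatically, but then the required conclusion --- no $n$ with $n\sum_{i\geq m}y_i=\infty$ for all $m$ --- is exactly the failure of asymptotic $\omega$-comparison (Proposition \ref{Prop:asymw}), i.e.\ the very statement to be proved; it cannot simply be ``chosen''. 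Decisively, the hypothesis $P\not\sim 1$ is never actually used: nothing in your argument derives the deficiency of \emph{all} multiples of the tails from the single non-equivalence, and since multiplication by $n$ in $\Cu(A)$ does not reflect order (that failure is the CFP phenomenon itself), knowing a comparison fails at level $n=1$ gives no control at level $n\geq 2$ for sub-projections. The appeal to \cite{Petz13} does not fill the hole: Petzka \emph{constructs a specific algebra} carrying such a projection (and there the pieces $p_j$ have equal finite functional values, hence are neither $<_s$-increasing nor produced by halving), whereas the proposition demands a construction inside the \emph{given} $A$. A proof along Ng's lines must exploit the internal structure of $P$: real rank zero enters through Zhang's theorem, writing $P=\sum_j p_j$ strictly with projections $p_j\in A\otimes\mathcal K$, and the witnessing sequence must be assembled from chunks of this decomposition, with $P\not\sim 1$ (equivalently, via Theorem \ref{ThmBrown} and the stability criterion of \cite{HR98}, the impossibility of absorbing initial chunks into tails) driving the estimates. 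Your proposal never touches this structure, so both standing hypotheses --- real rank zero and $P\not\sim 1$ --- remain decorative, and the ``hard part'' you flag is acknowledged rather than resolved.
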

 Above proposition easily provides the argument that, in the real rank zero framework, asymptotic regularity (or property R) implies Corona Factorization Property. Indeed, if there was a norm-full projection different than the unit in $\mathcal M(A\otimes\mathcal K)$, then asymptotic regularity would not hold. Therefore, by Corollary \ref{Cor:Equal} it follows that Regularity, Asymptotic regularity and Property R are equivalent in this setting (as shown in \cite[Theorem 4.14]{NgCFP}). 
 
  By Proposition \ref{prop:PropS}, one has the latter condition exposed in Proposition \ref{prop:notstable} is
  equivalent to the existence of a projection $P$ in a proper non-regular ideal in $\mathcal M(A\otimes\mathcal K)$. 
  In this context, to build an example satisfying {\rm CFP} and not being regular, one needs to build an algebra $A$ such that both $\mathcal M(A\otimes \mathcal K)$ has a projection in a proper non-regular ideal and all norm-full multiplier projections are Murray-von Neumann equivalent to $1_{\mathcal M(A\otimes\mathcal K)}$. We recommend to look at \cite{KN07} to learn more about non-regular ideals in the multiplier algebra of an stable C*-algebra.
  
  	\ \newpage
  {\bf Acknowledgments}. The author would like to thank J. Gabe for suggesting Lemma \ref{lem:Stable} and M. Christensen for preliminary discussions about asymptotic regularity. Moreover, I would also like to thank P. Ara for their comments on an earlier version of this paper.

\end{document}